\documentclass[12pt]{amsart}

\usepackage[english]{babel}
\usepackage[latin1]{inputenc}
\usepackage{amsmath,amssymb, color,enumerate,amssymb,wasysym,mathrsfs}
 \usepackage{mathptmx}
 \DeclareMathAlphabet{\mathcal}{OMS}{cmsy}{m}{n}

\DeclareSymbolFont{rsfscript}{OMS}{rsfs}{m}{n}
\DeclareSymbolFontAlphabet{\mathrsfs}{rsfscript}

\DeclareSymbolFont{AMSb}{U}{msb}{m}{n}
\DeclareSymbolFontAlphabet{\mathbb}{AMSb}

\DeclareSymbolFont{eufrak}{U}{euf}{m}{n}
\DeclareSymbolFontAlphabet{\gothic}{eufrak}

\newcommand{\mb}{\mathbb}

\newcommand{\f}{\mathfrak}

\DeclareMathOperator{\Spec}{Spec}
\newcommand{\rann}{\operatorname{r.ann}}
\newcommand{\lann}{\operatorname{l.ann}}
\newcommand{\Cal}[1]{{\mathcal #1}}

%\newcounter{num}

%\newcommand{\dom}{\addtocounter{num}{1}\thenum)}
%\theoremstyle{theorem}
\newtheorem{teor}{Theorem}[section]
\newtheorem{prop}[teor]{Proposition}
\newtheorem{Lemma}[teor]{Lemma}
\newtheorem{cor}[teor]{Corollary}
\newtheorem{Def}[teor]{Definition}
\theoremstyle{remark}

\setcounter{Es}{-1}
\newtheorem{Ex}[teor]{Example}
\newtheorem{Rem}[teor]{Remark}
\newcommand{\Z}{\mathbb{Z}}
\newcommand{\V}{\mathrm{V}}
\newcommand{\D}{\mathrm{D}}
\begin{document}
  \title[Multiplicative lattices]{Multiplicative lattices: maximal implies prime, and related questions.}
     \author{Alberto Facchini}
\address[Alberto Facchini]{Dipartimento di Matematica ``Tullio Levi-Civita'', Universit\`a di 
Padova, 35121 Padova, Italy}
 \email{facchini@math.unipd.it}
\thanks{The first author is partially supported by Ministero dell'Universit\`a e della Ricerca (Progetto di ricerca di rilevante interesse nazionale ``Categories, Algebras: Ring-Theoretical and Homological Approaches (CARTHA)'') and the Department of Mathematics ``Tullio Levi-Civita'' of the University of Padua (Research programme DOR1828909 ``Anelli e categorie di moduli''). The second author is partially supported by GNSAGA,  the projects PIACERI ``PLGAVA-Propriet\`a locali e globali di anelli e di variet\`a algebriche'' and ``MTTAI - Metodi topologici in teoria degli anelli e loro ideali'' of the University of Catania, and  the research project PRIN ``Squarefree Gr\"obner degenerations, special varieties and related topics''. 
Moreover, both authors are supported by Fondazione Cariverona (Research project ``Reducing complexity in algebra, logic, combinatorics - REDCOM'' within the framework of the programme Ricerca Scientifica di Eccellenza 2018).}
\author{Carmelo Antonio Finocchiaro}
\address[Carmelo Antonio Finocchiano]{Dipartimento di Matematica e Informatica, Universit\`a\ di Catania, Citt\`a\ Universitaria, viale Andrea Doria 6, 95125 Catania, Italy}
\thanks{}
\email{cafinocchiaro@unict.it}

   \keywords{Multiplicative lattice, prime spectrum, nilpotency, $m$-system, center (of an algebraic structure).}

      \begin{abstract} 
      The goal of this paper is to deepen the study of multiplicative lattices in the sense of Facchini, Finocchiaro and Janelidze. We provide a sort of Prime Ideal Principle that guarantees that maximal implies prime in a variety of cases (among them the case of commutative rings with identity). This result is used to study the lattice theoretic counterpart of multiplicative closed sets, that of $m$-systems. The notion of $m$-system is also studied from the topological point of view.    \end{abstract}

    \maketitle

{\small 2020 {\it Mathematics Subject Classification.} Primary 06B23, 06B99, 13A15. Secondary 16D25.}

%	\large
\section{Introduction} 

Multiplicative lattices have been recently introduced in \cite{FFJ}. They provide a natural common framework to present several fundamental notions of Algebra: prime ideals, (Zariski) prime spectra, nilpotency, solvability, central series, derived series, centralizers, center. They also allow to explain why the center of a group is a normal subgroup, and the center of a ring is a subring, but not an ideal. It is important to remark that in our multiplicative lattices multiplication is in general not associative, not commutative, does not have an identity, and no form of distributive law holds. In a multiplicative lattice $L$ in which  multiplication $\cdot$ distributes over join $\vee$, a maximal element $m$ of $L$ is prime if and only if $1\cdot 1\not\leqslant m$.

In this paper we continue our study of multiplicative lattices \cite{F, FFJ, FGT}. We extend the Prime Ideal Principle \cite{LR} from the case of commutative rings to the case of multiplicative lattices (Proposition~\ref{prime-ideal-principle}). We study $m$-systems, giving them a topological characterization (Theorem~\ref{alberto-wow} and Corollary~\ref{compact-saturated}).
We also provide the lattice-theoretic counterpart of the description due to Acosta and Rubio \cite{AR} of Zariski spectra of commutative rings without identity as open subsets of spectral topological spaces (Theorem~\ref{aperti-sono-spettri}). 

\section{Preliminary notions and terminology}

Recall some definitions given in \cite{FFJ}. 
    A {\em multiplicative lattice} $(L, \vee,\cdot)$ is a complete join-semilattice $(L, \vee)$ with a further binary operation $$\cdot{}\colon L\times L\to L$$ {\em (multiplication)} such that $xy\leqslant x$ and $xy\leqslant y$ for all $x,y\in L$.
    
    Clearly, a complete join-semilattice $(L, \vee)$ is a complete lattice, and therefore the conditions $xy\leqslant x$ and $xy\leqslant y$ are equivalent to $xy\leqslant x\wedge y$.
The least element and the largest element of $L$ will be denoted by $0$ and $1$, respectively.

	An element $p\neq1$ in $L$ is {\em prime} if $	xy\leqslant p\Rightarrow(x\leqslant p\,\,\,\text{or}\,\,\,y\leqslant p)$ for every $x,y\in L$. Let $\mathrm{Spec}(L)$ denote the set of all prime elements of $L$.
	
	For any $x\in L$, set
\begin{equation*}
\V_L(x):=\{p\in\mathrm{Spec}(L)\,|\,x\leqslant p\} 
\end{equation*}    and 
\begin{equation*}
\D_L(x):=\Spec(L)\setminus \V(x).
\end{equation*} When there is no danger of confusion, we will simply write $\mathrm{V}(x)$ and $\D(x)$, omitting of indicating the lattice $L$. The sets $\V_L(x)\ (x\in L)$ satisfy the axioms for the closed sets of a topology on $\Spec(L)$. 
This topology is the {\em Zariski topology}, and from now on we assume that $\mathrm{Spec}(L)$ is equipped with this topology, We will call this topological space the \textit{prime spectrum} of $L$. It is always a sober topological space \cite[Lemma 2.6]{FFJ}.

If $f\colon L\to M$ is a morphism of complete join-semilattices, that is, a mapping such that $f(\bigvee X)=\bigvee f(X)$ for every subset $X$ of $L$, then there is a unique morphism $u\colon M\to L$ of complete meet-semilattices such that $f(x)\leqslant y\Leftrightarrow x\leqslant u(y)$  for every $x\in L$ and every $y\in M$. This mapping $u$ is the right adjoint of $f$ when the partially ordered sets $L$ and $M$ are viewed as categories in the usual way. 

The category $\mathsf{CML}$ of complete multiplicative lattices is  the category whose morphisms are  the  morphisms $f\colon L\to M$ of complete join-semilat\-tices  such that  $f(x)f(x')\leqslant f(xx')$ for all $x,x'\in L$. Thus a morphism of complete multiplicative lattice is a mapping $f\colon L\to M$ such that $f(\bigvee X)=\bigvee f(X)$ for every subset $X$ of $L$, $f(x)f(x')\leqslant f(xx')$ for all $x,x'\in L$, and $f(1)=1$. 

\begin{teor} {\rm \cite[Theorem 3.2]{FFJ}} 
	Let $f\colon L\to M$ be a morphism in $\mathsf{CML}$ and let $u\colon M\to L$ be its right adjoint. Then:
	\begin{itemize}
		\item [(a)] If $p$ is a prime element of $M$, then $u(p)$ is a prime element of $L$.
		\item [(b)] The mapping $$\Spec(f)\colon {\Spec}(M)\to\Spec(L)$$ defined by ${\Spec}(f)(p)=u(p)$ of every $p\in\Spec(M)$ is continuous, because $$(\Spec(f))^{-1}(\mathrm{V}(x))=\mathrm{V}(f(x))$$ for every $x\in L$.
		\item [(c)] The assignment $L\mapsto \Spec(L)$ defines a functor $$\Spec\colon\mathsf{CML}^{\mathrm{op}}\to\mathsf{STop},$$ where $\mathsf{STop}$ is the category of sober topological spaces.
		\item [(d)]{\rm \cite[12.3(e)]{FFJ}} The  functor $\Spec\colon\mathsf{CML}^{\mathrm{op}}\to\mathsf{STop}$ is the right adjoint of the functor $\Omega\colon \mathsf{STop}\to \mathsf{CML}^{\mathrm{op}}$, that assigns to each sober space $X$ the complete lattice $\Omega(X)$ of its open subsets with multiplication the intersection of two open subsets.
	\end{itemize}
\end{teor}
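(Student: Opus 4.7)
The plan is to exploit throughout the defining property of the adjunction, $f(x) \leqslant y \iff x \leqslant u(y)$, together with the axioms of a $\mathsf{CML}$-morphism ($f(1)=1$, preservation of joins, and $f(x)f(x') \leqslant f(xx')$). Parts (a)--(c) reduce to short manipulations; (d) is the hardest and, as noted in the statement, I would defer its verification to the cited reference \cite[12.3(e)]{FFJ}.

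For (a), I would first observe that $u(p)\neq 1$: otherwise $1\leqslant u(p)$ yields $f(1)\leqslant p$ by adjunction, and $f(1)=1$ forces the contradiction $p=1$. Next, if $xy\leqslant u(p)$, adjunction gives $f(xy)\leqslant p$, so the multiplicative inequality $f(x)f(y)\leqslant f(xy)$ yields $f(x)f(y)\leqslant p$; primeness of $p$ forces $f(x)\leqslant p$ or $f(y)\leqslant p$, whence $x\leqslant u(p)$ or $y\leqslant u(p)$. For (b), the identity $(\Spec(f))^{-1}(\V(x)) = \V(f(x))$ unwinds to the chain $p\in(\Spec(f))^{-1}(\V(x)) \iff x\leqslant u(p) \iff f(x)\leqslant p \iff p\in\V(f(x))$; continuity of $\Spec(f)$ then follows since the family $\{\V(x)\mid x\in L\}$ is a basis of closed sets of $\Spec(L)$.

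For (c), functoriality is a consequence of the uniqueness of right adjoints in $\mathsf{CML}$: the right adjoint of $\mathrm{id}_L$ is $\mathrm{id}_L$, and the right adjoint of a composition $g\circ f$ is the composition $u\circ v$ of the individual right adjoints (taken in the reversed order). Hence $\Spec(\mathrm{id}_L)=\mathrm{id}_{\Spec(L)}$ and $\Spec(g\circ f)=\Spec(f)\circ\Spec(g)$, exhibiting $\Spec$ as a (contravariant) functor $\mathsf{CML}^{\mathrm{op}}\to\mathsf{STop}$; that the target lands in $\mathsf{STop}$ was already established in \cite[Lemma 2.6]{FFJ}.

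The main obstacle is part (d): one has to produce a natural bijection $\mathsf{STop}(X,\Spec(L)) \cong \mathsf{CML}(L,\Omega(X))$. One direction is transparent: a continuous map $\psi\colon X\to\Spec(L)$ induces $a\mapsto \psi^{-1}(\D(a))$, which is a $\mathsf{CML}$-morphism $L\to\Omega(X)$ (joins are preserved since $\D(\bigvee a_i)=\bigcup \D(a_i)$, and $\D(a)\cap\D(b)\subseteq \D(ab)$ holds because primeness of $p$ gives $ab\leqslant p\Rightarrow a\leqslant p$ or $b\leqslant p$). The reverse direction is where sobriety of $X$ is used essentially: given $\varphi\colon L\to\Omega(X)$ with right adjoint $u_\varphi$, every $x\in X$ yields via sobriety the prime element $X\setminus\overline{\{x\}}\in\Omega(X)$, and $\psi(x):=u_\varphi(X\setminus\overline{\{x\}})$ is prime in $L$ by part~(a). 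Verifying continuity of $\psi$ and that the two constructions are mutually inverse and natural in both variables is routine but technical, and I would rely on \cite[12.3(e)]{FFJ} for the detailed bookkeeping.
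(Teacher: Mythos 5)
Your proof is correct, but note that the paper does not actually prove this statement: it is quoted verbatim from \cite[Theorem 3.2]{FFJ}, so there is no in-paper argument to compare against. Your verifications of (a)--(c) are the standard adjunction manipulations (using $f(1)=1$, $f(x)f(y)\leqslant f(xy)$, the equivalence $f(x)\leqslant p\Leftrightarrow x\leqslant u(p)$, and uniqueness of right adjoints) and they all check out, and your decision to defer the bookkeeping for (d) to \cite[12.3(e)]{FFJ} mirrors exactly what the authors themselves do.
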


Notice that there is not a canonical way of choosing the multiplication in the lattice of all congruences of an algebraic structure. For instance, let $R$ be the commutative ring $\Z_{(p)}$, the DVR obtained localizing the ring $\Z$ of integers at a maximal ideal $(p)$. The lattice $L$ of all ideals of $R$ is isomorphic to the linearly ordered set $\Z_{\le0}\cup\{-\infty\}$. Then:

(1) If we fix on $L$ the usual multiplication of ideals $IJ$, corresponding to the addition in $\Z_{\le0}\cup\{-\infty\}$, then the prime ideals of $R$ are only $0$ and the maximal ideal of $R$ (corresponding to $-1$ and $-\infty$ respectively in the ordered set $\Z_{\le0}\cup\{-\infty\})$. 
     
     (2) If we fix on $L$ the multiplication $IJ+JI$ (which is the Huq=Smith commutator in the protomodular category of commutative rings), we get the same prime ideals as in (1).
     
     (3) If we fix on $L$ the multiplication in which the product is always $0$, we find that there is no prime ideal, i.e., the spectrum of $L$ is empty.
     
     (4) If we fix on $L$ the multiplication in which the product is $I\cap J$, we get that every proper ideal is prime.
     
     (5) If we fix on $L$ the multiplication in which the product is the commutator $[I,J]$ (i.e., the ideal of $R$ generated by all commutators $[i,j]=ij-ji$), then the spectrum is also empty, because $R$ is commutative. It is easy to see that if $R$ is a ring, then  the multiplication on the two-sided ideals of $R$ in which the product of two two-sided ideals $I,J$ is the commutator $[I,J]$ is the multiplication in which the product is always $0$ if and only if the ring $R$ is commutative.

\bigskip

An element $s$ of a multiplicative lattice $L$ is {\em semiprime} if $x^2\leqslant s$  implies $x\leqslant s$ for every $x\in L$. The {\em semiprime radical} of $L$ is the meet of all prime elements of $L$. The multiplicative lattice $L$ is {\em semisimple} if its semiprime radical is $0$. 

	Recall that an element $x$ of a lattice $L$ is {\em meet-irreducible} if $x=y\wedge z$ implies $x=y$ or $x=z$ for every $y,z\in L$. 

\begin{Lemma}\label{3'} {\rm \cite[Lemma 4.7]{F}} Let $L$ be an $m$-distributive algebraic multiplicative lattice. Then an element of $L$ is prime if and only if it is a meet-irreducible semiprime element of $L$.\end{Lemma}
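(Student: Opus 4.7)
The plan is to establish both directions, with the backward one being the substantive step. For the forward direction, if $p$ is prime then $x^2=x\cdot x\leqslant p$ immediately gives $x\leqslant p$, so $p$ is semiprime; and if $p=y\wedge z$ then $yz\leqslant y\wedge z=p$ together with primality forces $y\leqslant p$ or $z\leqslant p$, which combined with $p\leqslant y,z$ yields $p=y$ or $p=z$. So $p$ is meet-irreducible. Neither the $m$-distributivity nor the algebraicity hypothesis is needed here.

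For the converse, I would assume $p$ is meet-irreducible and semiprime and $xy\leqslant p$, aiming to conclude $x\leqslant p$ or $y\leqslant p$. Mimicking the classical commutative ring argument, pass to $x':=x\vee p$ and $y':=y\vee p$. Expanding by $m$-distributivity on both sides,
\begin{equation*}
x'y'=(x\vee p)(y\vee p)=xy\vee xp\vee py\vee p^2,
\end{equation*}
and each of the four summands is $\leqslant p$: the first by hypothesis, the others by the basic axiom $ab\leqslant a\wedge b$. Hence $x'y'\leqslant p$. Since $m$-distributivity also implies that multiplication is order-preserving in each argument (rewriting $b=a\vee b$ whenever $a\leqslant b$), we obtain $(x'\wedge y')^2\leqslant x'y'\leqslant p$, and then semiprimality of $p$ gives $x'\wedge y'\leqslant p$. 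Combined with $p\leqslant x'\wedge y'$ this forces $x'\wedge y'=p$, so by meet-irreducibility $p=x'$ or $p=y'$, i.e.\ $x\leqslant p$ or $y\leqslant p$.

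The main technical point to verify is that $m$-distributivity in the sense of \cite{F} indeed delivers both the bilinear expansion of $(x\vee p)(y\vee p)$ and the monotonicity of multiplication on each side; both are standard consequences of distributivity of $\cdot$ over binary joins, so this should be routine. Interestingly, the algebraicity hypothesis does not seem to intervene in this particular argument; presumably it is needed for other results of \cite{F} rather than here. One should also confirm that the convention in \cite{F} excludes $1$ from the class of meet-irreducible elements, since otherwise $p=1$ would vacuously satisfy the right-hand condition without being prime.
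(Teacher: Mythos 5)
Your proof is correct: the paper itself gives no argument for this lemma (it only cites \cite[Lemma 4.7]{F}), and your two-step argument --- expanding $(x\vee p)(y\vee p)$ by $m$-distributivity, bounding $(x'\wedge y')^2$ via the monotonicity that $m$-distributivity implies (this is exactly Lemma~\ref{dist-implies-mono}), then invoking semiprimality and meet-irreducibility --- is the standard and expected one. Your two caveats are also well placed: algebraicity is indeed not needed for this direct argument, and since the paper's definition of prime requires $p\neq 1$ while its literal definition of meet-irreducible admits $1$, the ``if'' direction does require the convention that meet-irreducible elements are $\neq 1$.
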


Let $C(L)$ denote the set of all compact elements of $L$. Recall that $L$ is {\em algebraic} if every element of $L$ is the join of some subset of $C(L)$.

 An {\em $m$-system} in $L$ is a nonempty subset $S$ of $C(L)$ such that for every $x,y\in S$ there exists $z\in S$ such that $z\leqslant xy$.  
 
 Let $x$ be an element of a multiplicative lattice $L$. 
 The {\em lower central series} (or {\em descending central series}) of $x$ is the descending series $$x=x_1\ge x_2\ge x_3\ge\dots,$$ where $x_{n+1}:=x_n\cdot x$ for every $n\ge 1$. If $x_n=0$ for some $n\ge 1$, then $x$ is {\em left nilpotent}. The element $x$ is {\em idempotent} if $x_2=x\cdot x=x$, and {\em abelian} if $x_2=x\cdot x=0$. Similarly, the element $x$ is {\em right nilpotent} if $_nx=0$ for some $n$, where now in the descending series  the elements $_nx$ are defined recursively  by $_{n+1}x:=x\cdot {}_nx$. 
 
 The {\em derived series} of $x$ \cite[Definition~6.1]{FFJ}  is the descending series $$x:=x^{(0)}\ge x^{(1)}\ge x^{(2)}\ge\dots,$$ where $x^{(n+1)}:=x^{(n)}\cdot x^{(n)}$ for every $n\ge 0$. The term $x':=x_2=x\cdot x=x^{(1)}$ is the {\em derived element} of $x$. The element $x$ of $L$ is {\em solvable} if $x^{(n)}=0$ for some integer $n\ge0$. 
 
 If the multiplication on the lattice is associative, then left nilpotency, right nilpotency and solvability of an element $x\in L$ coincide.
 
 Notice that the first pages of any standard text of Lie Algebras, like for instance \cite{Lie}, usually don't describe special properties of Lie Algebras, but very standard notions concerning their multiplicative lattice of ideals. Also see \cite{Kawa}.

\section{The monotonicity and $m$-distributivity conditions}

We will say that the  {\em monotonicity condition} holds in a multiplicative lattice $(L,\vee,\cdot)$ if $x\leqslant y$ and $x'\leqslant y'$ imply $xx'\leqslant yy'$ for every $x,y,x',y'\in L$. 

We recall the following facts for the reader's convenience. 

\begin{Lemma}\label{prime-compact}{\rm \cite[Lemma 6.16]{FFJ}}
Let $L$ be a multiplicative lattice satisfying the monotonicity condition, and let $p\in L\setminus\{1\}$. Then $p\in \Spec(L)$ if and only if whenever $a,b\in C(L)$ satisfy $ab\leqslant p$ then either $a\leqslant p$ or $b\leqslant p$. 
\end{Lemma}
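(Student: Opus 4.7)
The plan is to handle the two implications separately. The forward implication is essentially a tautology: the definition of prime element requires $xy\leqslant p\Rightarrow x\leqslant p$ or $y\leqslant p$ for \emph{all} pairs $x,y\in L$, and restricting the pairs to $C(L)\times C(L)$ is strictly weaker, so there is nothing to prove.

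For the converse direction, I would argue by contrapositive, using algebraicity of $L$ (the implicit setting in which compact elements carry non-trivial information) together with the monotonicity condition to reduce an arbitrary product $xy\leqslant p$ to products of compact elements. Write $x=\bigvee_{i\in I}a_i$ and $y=\bigvee_{j\in J}b_j$ with $a_i,b_j\in C(L)$. Since $a_i\leqslant x$ and $b_j\leqslant y$, monotonicity yields $a_ib_j\leqslant xy\leqslant p$, so by hypothesis $a_i\leqslant p$ or $b_j\leqslant p$ for every pair $(i,j)$. Now assume $x\not\leqslant p$: some $a_{i_0}\not\leqslant p$ must exist, and then the dichotomy applied to the pairs $(a_{i_0},b_j)$ forces $b_j\leqslant p$ for every $j\in J$, whence $y=\bigvee_j b_j\leqslant p$. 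Thus $p$ is prime.

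The only delicate point to recognise is that algebraicity of $L$ must be tacitly in force; otherwise the ``compact-only'' hypothesis could be vacuous (e.g.\ when $C(L)=\{0\}$) while $p$ fails to be prime, so the equivalence would break. Once algebraicity is granted, monotonicity is precisely the tool needed to transport the information from the compact generators $a_i,b_j$ up to $x$ and $y$ themselves.
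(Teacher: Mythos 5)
Your proof is correct and is essentially the argument the paper relies on: the lemma is quoted from \cite[Lemma 6.16]{FFJ}, and the paper's own generalization (Lemma~\ref{locally-prime}, for $A$-generated lattices) is explicitly obtained ``by applying the same argument,'' namely decomposing $x$ and $y$ into joins of compact (resp.\ $A$-) elements and using monotonicity to push $a_ib_j\leqslant xy\leqslant p$ down to the generators. Your observation that algebraicity must be tacitly in force is a good catch: the hypothesis is omitted in the statement as transcribed here, but it is exactly the ``$C(L)$-generated'' assumption made explicit in Lemma~\ref{locally-prime}, without which the compact-only condition could be vacuous.
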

	
\begin{Lemma}\label{msystem} {\rm \cite[Lemma 4.2]{F}} Let $(L,\vee,\cdot)$ be an algebraic multiplicative lattice that satisfies the monotonicity condition, and let $C(L)$ denote the set of all compact elements of $L$. An element $p\in L$ is prime if and only if $S_p:=\{\, c\in C(L)\mid c\nleqslant p\,\}$ is an $m$-system in $L$.\end{Lemma}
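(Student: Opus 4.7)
The plan is to prove both implications by combining algebraicity (to manufacture compact witnesses below joins) with Lemma~\ref{prime-compact} (to reduce primality to a condition on compact elements).

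For the forward direction, I would start by assuming $p$ is prime. Since $p\neq 1$ and $L$ is algebraic, $1$ is the join of compact elements, so not all compact elements can lie below $p$; hence $S_p$ is nonempty. Now pick $a,b\in S_p$, i.e., compact elements with $a\not\leqslant p$ and $b\not\leqslant p$. By primality, $ab\not\leqslant p$. Writing $ab=\bigvee\{c\in C(L)\mid c\leqslant ab\}$ (using algebraicity), if every such $c$ were $\leqslant p$, then $ab\leqslant p$, a contradiction. Therefore some compact $z\leqslant ab$ satisfies $z\not\leqslant p$, i.e., $z\in S_p$, which is exactly the $m$-system condition.

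For the converse, I would assume $S_p$ is an $m$-system and invoke Lemma~\ref{prime-compact}: it suffices to show that for all $a,b\in C(L)$, $ab\leqslant p$ implies $a\leqslant p$ or $b\leqslant p$. I argue the contrapositive: if $a,b\in C(L)$ with $a\not\leqslant p$ and $b\not\leqslant p$, then $a,b\in S_p$, so by the $m$-system property there exists $z\in S_p$ with $z\leqslant ab$. Since $z\not\leqslant p$ and $z\leqslant ab$, we conclude $ab\not\leqslant p$. Nonemptiness of $S_p$ forces $p\neq 1$ (some compact element is not $\leqslant p$), so Lemma~\ref{prime-compact} applies and yields $p\in\Spec(L)$.

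The main subtlety is the forward direction: one needs both algebraicity (to guarantee the witness $z\leqslant ab$ can be chosen compact) and the fact that $ab$ is not a join of elements all below $p$, which in turn uses primality at the level of arbitrary elements rather than just compact ones. The converse, by contrast, essentially just repackages Lemma~\ref{prime-compact} and does not need algebraicity beyond the definition of $S_p\subseteq C(L)$. Monotonicity enters only through the appeal to Lemma~\ref{prime-compact} in the converse.
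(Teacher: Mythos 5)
Your proof is correct. The paper itself states this lemma without proof (it is recalled from \cite[Lemma 4.2]{F}), and your argument is the natural one: algebraicity produces the compact witness $z\leqslant ab$ in the forward direction, and the converse reduces to Lemma~\ref{prime-compact} via the contrapositive. The only caveat worth flagging is your closing remark that the converse ``does not need algebraicity'': the nontrivial direction of Lemma~\ref{prime-compact} (compact condition implies prime) itself relies on $L$ being $C(L)$-generated, i.e.\ algebraic, as the more general Lemma~\ref{locally-prime} makes explicit; this does not affect the validity of your proof, since algebraicity is among the standing hypotheses here.
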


We will say that  {\em $m$-distributivity} holds in a multiplicative lattice $(L,\vee,\cdot)$ if $$(x\vee y)z=xz\vee yz\quad\mbox{\rm and}\quad x(y\vee z)=xy\vee xz$$ for every $x,y,z\in L$, 

\begin{Lemma}\label{dist-implies-mono}\cite[Lemma 4.3]{F} If $m$-distributivity holds in a multiplicative lattice $L$, then the monotonicity condition holds in $L$.\end{Lemma}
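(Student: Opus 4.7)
The plan is to exploit the characterization of the order relation in a join-semilattice: namely, $a\leqslant b$ if and only if $a\vee b=b$. Starting from the hypotheses $x\leqslant y$ and $x'\leqslant y'$, I rewrite $y=x\vee y$ and $y'=x'\vee y'$, so that any product involving $y$ or $y'$ can be expanded using the two $m$-distributivity laws.

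I would do this in two steps, bridging via the intermediate product $xy'$. First, applying left distributivity of multiplication over join to $x(x'\vee y')$, I get
\[
xy'=x(x'\vee y')=xx'\vee xy',
\]
which shows $xx'\leqslant xy'$. Next, applying right distributivity to $(x\vee y)y'$, I obtain
\[
yy'=(x\vee y)y'=xy'\vee yy',
\]
which shows $xy'\leqslant yy'$. Chaining these two inequalities gives $xx'\leqslant xy'\leqslant yy'$, which is exactly the monotonicity condition.

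There is essentially no obstacle here: both distributivity laws are invoked once, and the only non-computational step is recognizing that $\leqslant$ can be converted into a join identity so that $m$-distributivity becomes applicable. The proof uses neither associativity nor commutativity of the multiplication, nor any property of compact elements, so it applies to arbitrary multiplicative lattices satisfying $m$-distributivity.
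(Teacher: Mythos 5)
Your proof is correct and is the standard argument (the paper only cites this as \cite[Lemma 4.3]{F} without reproducing the proof, but the intended reasoning is exactly this): converting $x\leqslant y$ into $x\vee y=y$, applying each distributivity law once, and chaining $xx'\leqslant xy'\leqslant yy'$. Nothing further is needed.
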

%For the rest of this section, we will assume that $m$-distributivity holds.

We say that an algebraic multiplicative lattice in which $m$-distributivity holds is {\em hyperabelian} if it satisfies the equivalent conditions of the following theorem:

\begin{teor}\label{hyper}\cite[Theorem 4.11]{F} Let $(L,\vee,\cdot)$ be an algebraic multiplicative lattice in which $m$-distributivity holds. The following conditions are equivalent:

{\rm (a)} $1$ is the unique semiprime element of $L$.

{\rm (b)} For every $x\in L$, $x\ne 1$, there exists $y\in L$ such that $y>x$ and $y^2\leqslant x$.

{\rm (c)} There exists a strictly ascending chain $$0:=x_0< x_1< x_2<\dots< x_{\omega}< x_{\omega+1}< \dots <x_{\alpha}:=1$$ in $L$  indexed in the ordinal numbers less or equal to  $\alpha$ for some ordinal $\alpha$, such that $ x_{\beta+1}^2\leqslant x_{\beta}$ for every ordinal $\beta<\alpha$ and $x_\gamma=\bigvee_{\beta<\gamma}x_\beta$ for every limit ordinal $\gamma\le\alpha$.

{\rm (d)} The lattice $L$ has no prime elements, that is, $\Spec(L)=\emptyset$.

{\rm (e)}  The semiprime radical of $L$ is $1$.

{\rm (f)}  Every $m$-system of $L$ contains $0$.\end{teor}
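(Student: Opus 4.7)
The plan is to prove the cycle (a) $\Rightarrow$ (b) $\Rightarrow$ (c) $\Rightarrow$ (d) $\Rightarrow$ (a), together with the equivalences (d) $\Leftrightarrow$ (e) and (d) $\Leftrightarrow$ (f). For (a) $\Rightarrow$ (b), if $x\ne 1$ then by (a) the element $x$ is not semiprime, so some $y$ satisfies $y^2\leqslant x$ and $y\not\leqslant x$; replacing $y$ by $y\vee x$, $m$-distributivity expands $(y\vee x)^2$ as $y^2\vee yx\vee xy\vee x^2$, each summand being $\leqslant x$ (the last three by the axioms $uv\leqslant u$ and $uv\leqslant v$, the first by assumption). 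Implication (b) $\Rightarrow$ (c) is a standard transfinite recursion: set $x_0=0$, use (b) at successors, take joins at limits, and stop at the first ordinal that reaches $1$ (which exists because $L$ is a set). For (c) $\Rightarrow$ (d), if a prime $p$ existed, the least $\gamma$ with $x_\gamma\not\leqslant p$ would be neither $0$ nor a limit, so $\gamma=\beta+1$, and $x_\gamma^2\leqslant x_\beta\leqslant p$ combined with primality would force $x_\gamma\leqslant p$, a contradiction.

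The equivalence (d) $\Leftrightarrow$ (e) is immediate, since the semiprime radical is the meet of all prime elements, and this meet equals $1$ exactly when it is empty. For (d) $\Leftrightarrow$ (f), Lemma~\ref{msystem} yields (f) $\Rightarrow$ (d): a prime $p$ would produce the $m$-system $S_p\not\ni 0$. Conversely, given an $m$-system $S$ with $0\notin S$, the set $\{x\in L : s\not\leqslant x\text{ for every }s\in S\}$ contains $0$ and is closed under chain-joins (elements of $S$ are compact), so Zorn supplies a maximal element $p$. If compact $a,b$ satisfied $ab\leqslant p$ but $a,b\not\leqslant p$, maximality would produce $s_1,s_2\in S$ with $s_1\leqslant p\vee a$ and $s_2\leqslant p\vee b$, hence $s_3\in S$ with $s_3\leqslant s_1 s_2\leqslant (p\vee a)(p\vee b)$; $m$-distributivity rewrites the last term as $p^2\vee ap\vee pb\vee ab\leqslant p$, contradicting $s_3\not\leqslant p$. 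Lemma~\ref{prime-compact} then gives $p\in\Spec(L)$.

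The step I expect to require the most care is closing the cycle with (d) $\Rightarrow$ (a): from the absence of primes one must rule out any proper semiprime $s$. The strategy is to attach to such an $s$ an $m$-system avoiding $0$, and then invoke (d) $\Rightarrow$ (f) already established. Algebraicity (applied to $s\ne 1$) supplies a compact $c_0\not\leqslant s$, and recursively we choose a compact $c_{n+1}\leqslant c_n^2$ with $c_{n+1}\not\leqslant s$: semiprimeness of $s$ forces $c_n^2\not\leqslant s$, and algebraicity provides a compact witness below $c_n^2$ but not below $s$. The sequence is descending, $c_{n+1}\leqslant c_n^2\leqslant c_n$, and for $i\leqslant j$ monotonicity (Lemma~\ref{dist-implies-mono}) yields $c_{j+1}\leqslant c_j^2\leqslant c_i c_j$, so $S=\{c_n : n\geqslant 0\}$ is an $m$-system; since each $c_n\not\leqslant s$ is nonzero, $0\notin S$. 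This contradicts (f), hence (d), completing the cycle. The delicate point is orchestrating the iterated-squaring construction so that descent, monotonicity from $m$-distributivity, and algebraicity combine to make a single sequence automatically an $m$-system.
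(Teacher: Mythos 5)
Your argument is correct and complete: the cycle (a)$\Rightarrow$(b)$\Rightarrow$(c)$\Rightarrow$(d), the equivalences (d)$\Leftrightarrow$(e) and (d)$\Leftrightarrow$(f), and the closing step (d)$\Rightarrow$(a) via the iterated-squaring $m$-system all check out, and each use of $m$-distributivity, monotonicity, algebraicity and Lemmas~\ref{prime-compact} and~\ref{msystem} is legitimate. The paper itself states this theorem as a recalled result from \cite[Theorem 4.11]{F} without reproducing a proof, so no line-by-line comparison is possible, but your proof is the natural one built from exactly the auxiliary facts the paper quotes (in particular, your (d)$\Rightarrow$(f) step is essentially Proposition~\ref{max}(3)).
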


\begin{prop} Let $L$ be a multiplicative lattice in which $m$-distributivity holds.  An element $m$ maximal in $L$, that is, a maximal element of $L\setminus \{1\}$, is a prime element of $L$ if and only if $1\cdot 1\not\leqslant m$.\end{prop}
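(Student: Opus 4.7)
The plan is to handle the two directions separately, with the forward direction essentially definitional and the reverse direction reducing to a single expansion enabled by $m$-distributivity.

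For the forward direction, assume $m$ is a prime element of $L$. By definition, $m \neq 1$. If we had $1 \cdot 1 \leqslant m$, then applying the defining implication of primeness with $x = y = 1$ would force $1 \leqslant m$, contradicting $m \in L\setminus\{1\}$. Hence $1\cdot 1 \not\leqslant m$.

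For the reverse direction, suppose $m$ is maximal in $L\setminus\{1\}$ and $1\cdot 1 \not\leqslant m$; I want to show $m$ is prime. Since $m \neq 1$, it suffices to verify the implication in the definition of prime. Let $x,y \in L$ with $xy \leqslant m$, and argue by contradiction: assume $x \not\leqslant m$ and $y \not\leqslant m$. Then $x\vee m$ and $y\vee m$ both strictly exceed $m$, so by maximality of $m$ in $L\setminus\{1\}$ we conclude $x\vee m = 1 = y\vee m$.

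The core step is now the expansion
\[
1\cdot 1 \;=\; (x\vee m)(y\vee m) \;=\; xy \,\vee\, xm \,\vee\, my \,\vee\, mm,
\]
where both applications of distributivity of $\cdot$ over $\vee$ use the $m$-distributivity hypothesis. Each of the four joinands is $\leqslant m$: the term $xy$ by assumption, and the other three by the defining inequality $ab \leqslant a\wedge b$ of a multiplicative lattice (so $xm\leqslant m$, $my\leqslant m$, and $mm\leqslant m$). Therefore $1\cdot 1 \leqslant m$, contradicting the hypothesis.

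The argument is essentially a lattice-theoretic transcription of the classical fact that a maximal ideal $\mathfrak m$ of a non-unital commutative ring is prime whenever $R^2 \not\subseteq \mathfrak m$. The only genuine hypothesis used is $m$-distributivity, which is precisely what allows the expansion of $(x\vee m)(y\vee m)$; accordingly, there is no real obstacle to overcome, and the proof should be short.
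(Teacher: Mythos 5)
Your proof is correct and follows essentially the same route as the paper's: both hinge on the expansion $1\cdot 1=(x\vee m)(y\vee m)=xy\vee xm\vee my\vee mm\leqslant m$ enabled by $m$-distributivity, with the paper merely phrasing the argument as the contrapositive equivalence ``$m$ not prime iff $1\cdot 1\leqslant m$''. No gaps.
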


\begin{proof} Let $L$ be a multiplicative lattice in which $m$-distributivity holds and $m$ be a maximal element of $L$. We will prove that $m$ is not a prime element of $L$ if and only if $1\cdot 1\leqslant  m$.
	
	If $m$ is not a prime element of $L$, there exist $x,y\in L$ such that $x\not\leqslant     m$, $y\not\leqslant m$ and $xy\leqslant m$. By the maximality of $m$, we get that $x\vee m=1$ and $y\vee m=1$. Thus  $1\cdot 1=(x\vee m)(y\vee m)=xy\vee xm\vee my\vee mm\leqslant xy\vee m\vee m\vee m=m$.
	
	Conversely, suppose $1\cdot 1\leqslant m$. If $m$ is prime, then $1\leqslant m$, so $m=1$, a contradiction. Therefore $m$ is not prime in $L$.\end{proof} 

\begin{prop}\label{prime-associative-case}
Let $L$ a multiplicative lattice in which m-distributivity holds and assume the the multiplication of $L$ is associative. If an element $p\in L\setminus\{1\}$ is not prime, then there exist elements $x,y\in L$ such that $x\nleqslant p,y\nleqslant p, xy\leqslant p$ and $yx\leqslant p$. 
\end{prop}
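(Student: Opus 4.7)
The plan is to unpack the failure of primeness and then to construct suitably symmetric witnesses. Since $p$ is not prime, there exist $a,b\in L$ with $a\not\leqslant p$, $b\not\leqslant p$, and $ab\leqslant p$. The target is to produce $x,y\in L$ with $x,y\not\leqslant p$ and both $xy\leqslant p$ and $yx\leqslant p$.

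I would first dispose of the trivial case: if already $ba\leqslant p$, then $x:=a$ and $y:=b$ work immediately. The substantive case is when $ba\not\leqslant p$. Here I would set $x=y:=ba$; this collapses the two required inequalities into the single condition $(ba)(ba)\leqslant p$. Using associativity of the multiplication, $(ba)(ba)=b\bigl((ab)a\bigr)$. By Lemma~\ref{dist-implies-mono}, $m$-distributivity entails the monotonicity condition, so from $ab\leqslant p$ we get $b(ab)\leqslant bp$, and then $\bigl(b(ab)\bigr)a\leqslant (bp)a$. Finally, the defining inequality $xy\leqslant y$ of any multiplicative lattice gives $bp\leqslant p$ and $(bp)a\leqslant pa\leqslant p$, closing the estimate. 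Since $x=y=ba\not\leqslant p$, this completes the construction.

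The step that deserves the most attention is the observation that one is allowed to take $x=y$, which reduces the whole statement to the claim that a non-prime element admits some $c\not\leqslant p$ with $c^{2}\leqslant p$. Once this is seen, there is no real obstacle: associativity is used in exactly one essential spot, namely to rewrite $(ba)(ba)$ as $b(ab)a$ so that the hypothesis $ab\leqslant p$ can be substituted in the middle, and monotonicity (from $m$-distributivity) together with the basic inequalities $xy\leqslant x$, $xy\leqslant y$ does the rest.
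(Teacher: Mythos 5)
Your proof is correct and follows essentially the same route as the paper's: the same case split on whether $ba\leqslant p$, and the same use of associativity to rewrite $(ba)(ba)$ so that the factor $ab\leqslant p$ sits inside the product and forces $(ba)^2\leqslant p$. The only cosmetic differences are that you take $x=y=ba$ where the paper takes $x=y=(ba)\vee p$, and that you invoke monotonicity where the defining inequalities $uv\leqslant u$ and $uv\leqslant v$ already give $(b(ab))a\leqslant b(ab)\leqslant ab\leqslant p$ directly.
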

\begin{proof}
By definition, there are elements $a,b\in L$ such that $a\nleqslant p,b\nleqslant p$ and $ab\leqslant p$. If $ba\leqslant p$ the conclusion follows. Thus we can assume that $ba\nleqslant p$. It follows that $(ba)\vee p\nleqslant p$. Since the multiplication of $L$ is associative, we have $(ba)^2=(ba)(ba)=b(ab)a\leqslant p$. From the m-distributivity we infer that
$$
((ba)\vee p\nleqslant p)^2=(ba)^2\vee (ba)p\vee p(ba)\vee p^2\leq p.
$$
Thus in this case the conclusion follows by taking $x=y=(ba)\vee p$. 
\end{proof}

\section{Oka and Ako subsets: a Prime Ideal Principle}

Let $(L,\vee,\cdot)$ denote a multiplicative lattice. Given elements $a,b\in L$, set
$$
(a:_lb):=\bigvee \{\,x\in L\mid x\cdot b\leq a \,\}\quad\mbox{\rm and}\quad(a:_rb):=\bigvee \{\,x\in L\mid b\cdot x\leq a \,\}.
$$
If $A\subseteq L$ we say that $L$ is \emph{$A$-generated} if every element of $L$ is the supremum of some subset of $A$ (thus, $L$ is algebraic if and only if $L$ is $C(L)$-generated). 
By applying the same argument given in the proof of \cite[Lemma 6.16]{FFJ} we get the following result. 
\begin{Lemma}\label{locally-prime}
Let $L$ be a multiplicative lattice that is $A$-generated, for some subset $A\subseteq L$, and satisfies the monotinicity condition. Then an element $p\in L\setminus\{1\}$ is prime if and only if for every $a,b\in A$,
$
ab\leqslant p $ implies that either $a\leqslant p$ or $b\leqslant p$. 
\end{Lemma}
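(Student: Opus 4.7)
The plan is to follow the pattern of the proof of Lemma~\ref{prime-compact} (that is, \cite[Lemma 6.16]{FFJ}), replacing the role of compact elements by arbitrary elements of $A$ and the hypothesis of being algebraic by the hypothesis of being $A$-generated. The forward implication is immediate from the definition of primality, since it simply specialises the universal quantifier ``for every $x,y\in L$'' to ``for every $a,b\in A$''.

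For the non-trivial direction, I would argue by contrapositive. Assume that $p\in L\setminus\{1\}$ is not prime; then by definition there exist $x,y\in L$ such that $x\not\leqslant p$, $y\not\leqslant p$ and $xy\leqslant p$. Since $L$ is $A$-generated, write $x=\bigvee X$ and $y=\bigvee Y$ for suitable subsets $X,Y\subseteq A$. From $x\not\leqslant p$ it follows that at least one element $a\in X$ must satisfy $a\not\leqslant p$: indeed, if every $a\in X$ were $\leqslant p$, then taking the join would give $x=\bigvee X\leqslant p$, a contradiction. Analogously, there exists $b\in Y$ with $b\not\leqslant p$.

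The key step is now to produce a failure of the hypothesis at the level of $A$, and this is exactly where the monotonicity condition is used. Since $a\leqslant x$ and $b\leqslant y$, monotonicity yields $ab\leqslant xy\leqslant p$, while $a\not\leqslant p$ and $b\not\leqslant p$; hence the condition ``$ab\leqslant p$ implies $a\leqslant p$ or $b\leqslant p$ for every $a,b\in A$'' fails. This contradicts the hypothesis, completing the contrapositive.

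There is no real obstacle here: the only two ingredients used are $A$-generation (to replace $x$ and $y$ by ``test elements'' inside $A$ lying below them, without losing the property of not being $\leqslant p$) and monotonicity (to transfer the inequality $xy\leqslant p$ down to $ab\leqslant p$). Both are precisely the hypotheses assumed in the statement, so the argument runs through with essentially no modification from the compact/algebraic case.
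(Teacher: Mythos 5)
Your proof is correct and follows exactly the route the paper intends: the paper gives no separate argument but simply says to repeat the proof of \cite[Lemma 6.16]{FFJ} (Lemma~\ref{prime-compact}) with $C(L)$ replaced by $A$, which is precisely what you do, using $A$-generation to find $a\leqslant x$, $b\leqslant y$ in $A$ not below $p$ and monotonicity to get $ab\leqslant xy\leqslant p$.
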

Again, combining the proof of \cite[Lemma 6.16]{FFJ} and Proposition \ref{prime-associative-case} we get the following fact. 
\begin{Lemma}\label{locally-prime-associative-case}
Let $L$ be a multiplicative lattice with associative multiplication and assume that  $L$ is $A$-generated, for some subset $A\subseteq L$, that satisfies the monotinicity condition. If an element $p\in L\setminus\{1\}$ is not prime, then there exists elements $a,b\in A$ such that $a\nleqslant p,b\nleqslant p, ab\nleqslant p$ and $ba\nleqslant p$. 
\end{Lemma}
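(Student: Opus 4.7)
The plan is to combine Proposition~\ref{prime-associative-case}, which supplies symmetric witnesses to the failure of primality inside $L$, with the $A$-generatedness of $L$, mirroring the strategy of \cite[Lemma 6.16]{FFJ} (reproduced above as Lemma~\ref{prime-compact}) for the special case $A=C(L)$. The role of $A$-generatedness together with monotonicity is simply to push a witness living somewhere in $L$ down to one whose components lie in the prescribed generating set~$A$.

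Concretely, assume $p\in L\setminus\{1\}$ is not prime. Since the multiplication is associative, Proposition~\ref{prime-associative-case} yields elements $x,y\in L$ with $x\not\leqslant p$, $y\not\leqslant p$, $xy\leqslant p$ and $yx\leqslant p$. By $A$-generatedness, write $x=\bigvee X$ and $y=\bigvee Y$ for some subsets $X,Y\subseteq A$. If every $a\in X$ satisfied $a\leqslant p$, we would obtain $x=\bigvee X\leqslant p$, contradicting $x\not\leqslant p$; hence there exists $a\in X$ with $a\not\leqslant p$, and likewise there exists $b\in Y$ with $b\not\leqslant p$. Since $a\leqslant x$ and $b\leqslant y$, the monotonicity condition now forces $ab\leqslant xy\leqslant p$ and $ba\leqslant yx\leqslant p$, which yields the required pair $a,b\in A$.

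The only point that genuinely needs attention is the mismatch between the hypothesis of the present lemma — which names only the monotonicity condition — and the hypothesis of Proposition~\ref{prime-associative-case}, which is stated under $m$-distributivity (the latter is used precisely to expand $((ba)\vee p)^2$ in the proof of that proposition). By Lemma~\ref{dist-implies-mono}, $m$-distributivity does imply monotonicity, so if one silently assumes $m$-distributivity — as the parallel formulation of Proposition~\ref{prime-associative-case} suggests is intended — the application is legitimate. Once the symmetric pair $(x,y)$ is secured, the descent to $A$ is entirely routine, relying only on the universal property of joins and the monotonicity of the multiplication.
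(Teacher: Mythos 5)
Your argument follows exactly the route the paper intends: the paper's own ``proof'' is the single sentence ``combining the proof of \cite[Lemma 6.16]{FFJ} and Proposition~\ref{prime-associative-case}'', i.e.\ produce a symmetric witness $(x,y)$ in $L$ and then descend to $A$ by writing $x$ and $y$ as joins of elements of $A$ and invoking monotonicity --- which is precisely what you do, and your descent step is correct. (Note in passing that the ``$ab\nleqslant p$ and $ba\nleqslant p$'' in the statement is a typo for ``$ab\leqslant p$ and $ba\leqslant p$'', as the way the lemma is used in the proof of Proposition~\ref{prime-ideal-principle}(3) confirms; you prove the corrected version, which is the right thing to do.)

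The one substantive issue is the hypothesis mismatch that you flag and then wave away. Proposition~\ref{prime-associative-case} is stated and proved under $m$-distributivity, whereas the present lemma assumes only the monotonicity condition; monotonicity does not imply $m$-distributivity, so ``silently assuming'' the stronger hypothesis proves a weaker statement than the one asserted. You should close the gap rather than record it, and it closes easily, because the detour through $(ba)\vee p$ in Proposition~\ref{prime-associative-case} is not needed here. Start from Lemma~\ref{locally-prime}, which gives $a,b\in A$ with $a\nleqslant p$, $b\nleqslant p$ and $ab\leqslant p$. If also $ba\leqslant p$ you are done. Otherwise $ba\nleqslant p$, and associativity together with the defining inequality $xy\leqslant x\wedge y$ of multiplicative lattices yields
$$(ba)(ba)=b\bigl((ab)a\bigr)\leqslant (ab)a\leqslant ab\leqslant p,$$
with no distributivity involved. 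Since $L$ is $A$-generated, write $ba=\bigvee C$ with $C\subseteq A$; as $ba\nleqslant p$, some $d\in C$ satisfies $d\nleqslant p$, and monotonicity gives $dd\leqslant (ba)(ba)\leqslant p$. The pair $(d,d)$ then witnesses the conclusion. This proves the lemma under exactly its stated hypotheses and, incidentally, shows that the $m$-distributivity assumption in Proposition~\ref{prime-associative-case} could itself be dropped by taking $x=y=ba$ there.
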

\begin{Def} {\rm 
	Let $L$ be a multiplicative lattice and let $A$  be a subset of $L$. 
	
	{\rm (1)} We say that a subset $F$ of $L$ is a \emph{left $A$-Oka subset of $L$} if $1\in F$ and, given elements $a\in A, l\in L$, then  
	$$
	(a\vee l\in F \mbox{ and } (l:_la)\in F) \Rightarrow l\in F.
	$$
	
	{\rm (2)} We say that a subset $F$ of $L$ is a \emph{right $A$-Oka subset of $L$} if $1\in F$ and, given elements $a\in A, l\in L$, then  
	$$
	(a\vee l\in F \mbox{ and } (l:_ra)\in F) \Rightarrow l\in F.
	$$
	
	{\rm (3)} We say that a subset $F$ of $L$ is a \emph{$A$-Oka subset of $L$} if $1\in F$ and, given elements $a\in A, l\in L$, then  
	$$
	(a\vee l\in F, (l:_la)\in F \mbox{ and } (l:_ra)\in F) \Rightarrow l\in F.
	$$
	
	{\rm (4)}  $F$ is \emph{an  $A$-Ako subset of $L$} if $1\in F$ and, given an element $l\in L,a,b\in A$, then 
	$$
	(l\vee a\in F \mbox{ and } l\vee b\in F) \Rightarrow l\vee (a\cdot b)\in F.
	$$}
\end{Def}

\begin{prop}[Prime Ideal Principle]\label{prime-ideal-principle}
	Let $(L,\vee,\cdot)$ be a multiplicative lattice that satisfies the monotonicity condition and that is $A$-generated, for some set $A\subseteq L$. Assume that a subset $F\subseteq L$ satisfies one of the following conditions:

{\rm (1)}  $F$ is a left $A$-Oka subset;
	
	{\rm (2)}  $F$ is a right $A$-Oka subset;
	
	{\rm (3)}  $F$ is an $A$-Oka subset and the multiplication of $L$ is associative; 
	
	{\rm (4)}  $F$ is an $A$-Ako subset.
	
\noindent	Then every maximal element of $L\setminus F$ is prime.  
\end{prop}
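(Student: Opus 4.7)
The plan is to argue by contradiction in each of the four cases uniformly: let $m$ be a maximal element of $L\setminus F$, assume $m$ is not prime, and use the $A$-generation together with Lemma~\ref{locally-prime} (or Lemma~\ref{locally-prime-associative-case} in case (3)) to produce witnesses $a,b\in A$ with $a\not\leqslant m$, $b\not\leqslant m$, and $ab\leqslant m$ (and, in the associative case, also $ba\leqslant m$). I will then exploit the maximality of $m$ in $L\setminus F$: anything strictly greater than $m$ must belong to $F$, and each Oka/Ako hypothesis will force $m\in F$, a contradiction.

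More concretely, for case (4) the argument is essentially immediate: with $l:=m$ and $a,b\in A$ as above, both $m\vee a>m$ and $m\vee b>m$ lie in $F$ by maximality, so the $A$-Ako property gives $m\vee(a\cdot b)\in F$; since $ab\leqslant m$, this is just $m\in F$. For case (2), take $l:=m$ and apply right $A$-Oka with $a$: clearly $m\vee a>m$ lies in $F$, while $(m:_r a)=\bigvee\{x:a\cdot x\leqslant m\}$ is $\geqslant b$ because $ab\leqslant m$, so $(m:_r a)>m$ and therefore $(m:_r a)\in F$; the right Oka property yields $m\in F$. For case (1), the symmetric move works with the roles of $a$ and $b$ swapped: put $b$ in the ``$a$-slot,'' so that $ab\leqslant m$ gives $a\leqslant (m:_l b)$, hence $(m:_l b)>m$ and $(m:_l b)\in F$, and left $A$-Oka forces $m\in F$.

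Case (3) is where the associativity is used. Here Lemma~\ref{locally-prime-associative-case} supplies witnesses $a,b\in A$ with both $ab\leqslant m$ and $ba\leqslant m$. Taking $l:=m$ and the element $a\in A$, maximality gives $a\vee m\in F$; from $ba\leqslant m$ we get $b\leqslant (m:_l a)$, so $(m:_l a)>m$ and $(m:_l a)\in F$; from $ab\leqslant m$ we get $b\leqslant (m:_r a)$, so $(m:_r a)>m$ and $(m:_r a)\in F$. The $A$-Oka property then delivers $m\in F$, a contradiction.

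The only substantive point in all four arguments is that, in the relevant ``colon'' or ``join'' constructions, the element produced is strictly larger than $m$ so that maximality places it in $F$. This is where one actually uses that the witnesses $a,b$ lie in $A$ and fail to be below $m$; the monotonicity condition is invoked implicitly through Lemmas~\ref{locally-prime} and \ref{locally-prime-associative-case}, which supply those witnesses in $A$ in the first place. I expect the subtlest bookkeeping will be keeping the left/right asymmetry straight in cases (1)--(3), namely matching $(l:_l a)$ with $ba\leqslant m$ and $(l:_r a)$ with $ab\leqslant m$.
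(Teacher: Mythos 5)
Your proof is correct and follows essentially the same route as the paper: reduce to witnesses $a,b\in A$ via Lemmas~\ref{locally-prime} and \ref{locally-prime-associative-case}, then use maximality of $m$ in $L\setminus F$ to push the relevant joins and colon elements into $F$ and derive $m\in F$. The only steps worth making explicit are that $m\neq 1$ (since $1\in F$ by the definition of Oka/Ako subsets) and that $m\leqslant (m:_r a)$ and $m\leqslant (m:_l a)$ (from $am\leqslant m$ and $ma\leqslant m$), which is what upgrades ``$(m:_r a)\geqslant b$'' to the strict comparison $(m:_r a)>m$ needed to invoke maximality.
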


\begin{proof}
	Since $p\in L\setminus F$, $p\neq 1$. By contradiction, assume that there exist elements $a,b\in L$ such that $a\cdot b\leqslant p$ and $a\not \leqslant p$, $b\not \leqslant p$. Since $L$ is $A$-generated  and satisfies the monotonicity condition, we can assume, without loss of generality, that $a,b\in A$, in view of Lemma \ref{locally-prime}. 
	
	(1) Assume that $F$ is a left $A$-Oka subset. Then
	 $p\cdot b\leqslant p$ implies that $p\leqslant (p:_lb)$. Moreover, since $a\cdot b\leqslant p$, we have $a\leqslant (p:_lb)$, and as $a\not \leqslant p$ we infer $p<(p:_lb)$. Moreover  $p<p\vee b$, since $b\not \leqslant p$. By the maximality of $p$ in $L\setminus F$, we have $p\vee b\in F$ and $(p:_lb)\in F$ and thus, since $F$ is a left $A$-Oka subset of $L$, it follows that $p\in F$, a contradiction. 
	 
(2) If $F$ is a right $A$-Oka subset a similar argument (to that given in (1)) works. 

(3) Suppose that $F$ is an $A$-Oka subset and that the multiplication of $L$ is associative. Keeping in mind Lemma \ref{locally-prime-associative-case}, we can assume also that $ba\leq p$. Now the same argument of case (1) implies that $p<(p:_lb), p< (p:_r b), p< (p:_r b)$. Since $p$ is maximal in $L\setminus F$, it follows that $(p:_lb),  (p:_r b),  (p:_r b)\in F$ and thus, since $F$ is $A$-Oka, $p\in F$, a contradiction. 
	
(4)	Now assume that $F$ is an Ako subset of $F$. Since $p<a\vee p$ and $p<b\vee p$, we get that $a\vee p\in F$ and $b\vee p\in F$ and thus $ p=p\vee (a\cdot b)\in F$, a contradiction. 
\end{proof}
\begin{Ex} (1)  Let $R$ be a commutative ring with identity. Consider the multiplicative lattice $L$ of ideals of $R$ (where the multiplication is the usual one) and let $A$ be the subset of $L$ consisting of all principal ideals. Clearly $L$ is $A$-generated. In this case (left/right) $A$-Oka and $A$-Ako subsets of $L$ are precisely Oka and Ako families of ideals considered in \cite{LR}. Thus a very special case of Proposition \ref{prime-ideal-principle} is
		\cite[2.4]{LR}. 
		
		(2) Moreover, notice that Proposition \ref{prime-ideal-principle}(3) yields as a particular case \cite[Theorem 2.5]{R-comm-algebra}. 
\end{Ex}

The next proposition is the analogue, for multiplicative lattices, of the fact that an ideal of a commutative ring $R$ maximal with respect to the property of being disjoint from a multiplicatively closed subset of $R$, is a prime ideal of $R$. 

\begin{prop}\label{max}
	Let $L$ be a multiplicative lattice, let $S$ be an $m$-system of $L$  and let 
	$$
	\Sigma:=\{l\in L\mid s\nleqslant l \mbox{ for every } s\in S \}. 
	$$
	
	{\rm (1)}  If $0\notin S$, then the set $\Sigma$ has maximal elements. 
		
			{\rm (2)} If $L$ is $m$-distributive, then $L\setminus \Sigma$ is a $C(L)$-Ako subset of $L$. 
			
			{\rm (3)}  { \rm \cite[Proposition 4.8]{F}} If $L$ is algebraic and $m$-distributive, then every maximal element of $\Sigma$ is prime. 
\end{prop}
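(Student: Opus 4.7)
The plan is to treat the three items in order, observing that (3) will follow almost immediately from (2) once the Prime Ideal Principle is invoked, so that the real work lies in (1) and (2).

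For (1), I would apply Zorn's lemma to $\Sigma$, ordered by the restriction of $\leqslant$. The hypothesis $0\notin S$ is used exactly to show $0\in\Sigma$: if some $s\in S$ satisfied $s\leqslant 0$, then $s=0\in S$, a contradiction; hence $\Sigma$ is nonempty. For a chain $\{l_i\}_{i\in I}$ in $\Sigma$, set $l:=\bigvee_{i\in I}l_i$. The point is that elements of $S$ are compact, since by definition an $m$-system is a subset of $C(L)$: if $s\in S$ and $s\leqslant \bigvee_{i\in I}l_i$, then $s\leqslant l_{i_1}\vee\cdots\vee l_{i_n}$ for finitely many indices, and since we are in a chain, $s\leqslant l_{i_k}$ for the largest of these, contradicting $l_{i_k}\in\Sigma$. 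Thus $l\in\Sigma$ is an upper bound and Zorn's lemma applies.

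For (2), I need to check that $F:=L\setminus\Sigma$ satisfies the defining axioms of a $C(L)$-Ako subset. First, since $S$ is nonempty, any $s\in S$ satisfies $s\leqslant 1$, so $1\notin\Sigma$, i.e., $1\in F$. For the implication, take $l\in L$ and $a,b\in C(L)$ with $l\vee a\in F$ and $l\vee b\in F$. By definition of $F$, there exist $s,t\in S$ with $s\leqslant l\vee a$ and $t\leqslant l\vee b$, and since $S$ is an $m$-system there exists $u\in S$ with $u\leqslant st$. Now Lemma~\ref{dist-implies-mono} gives that $m$-distributivity implies the monotonicity condition, so $st\leqslant(l\vee a)(l\vee b)$; expanding by $m$-distributivity and using $ll\leqslant l$, $lb\leqslant l$, $al\leqslant l$ yields
$$
(l\vee a)(l\vee b)=ll\vee lb\vee al\vee ab\leqslant l\vee ab.
$$
Hence $u\leqslant l\vee ab$, which shows $l\vee ab\in F$, as required.

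For (3), assume furthermore that $L$ is algebraic, so that $L$ is $C(L)$-generated. By Lemma~\ref{dist-implies-mono} the monotonicity condition holds, and by (2) the set $F=L\setminus\Sigma$ is a $C(L)$-Ako subset of $L$. Proposition~\ref{prime-ideal-principle}(4) applied with $A=C(L)$ then yields that every maximal element of $L\setminus F=\Sigma$ is prime. I don't foresee any serious obstacle: the only delicate points are the use of compactness of $m$-system elements in (1) and the bilinear expansion in (2), both of which are routine once the hypotheses are unpacked.
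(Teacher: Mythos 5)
Your proposal is correct and follows essentially the same route as the paper: Zorn's Lemma via compactness of the elements of $S$ for (1), the expansion $(l\vee a)(l\vee b)=l^2\vee lb\vee al\vee ab\leqslant l\vee ab$ together with the $m$-system property for (2), and an appeal to the Prime Ideal Principle with $A=C(L)$ for (3). The only cosmetic difference is that you exhibit $0\in\Sigma$ explicitly to show $\Sigma\neq\emptyset$, which the paper leaves implicit.
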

\begin{proof}
	(1). Since $0\notin S$, it follows $\Sigma\neq\emptyset$. If $C\subseteq \Sigma$ is a chain and $c:=\bigvee C$, then $c\in \Sigma$ (otherwise, there would exist an element $s\in S$ such that $s\leqslant c$, and since $s$ is compact and $C$ is a chain, we could pick an element $c_0\in C$ such that $s\leqslant c_0$, against the fact that $c_0\in \Sigma$). The conclusion follows from Zorn's Lemma. 
	
	(2). Set $F:=L\setminus\Sigma$. Clearly $1\in F$. Take an element $l\in L$ and compact elements $a,b\in L$ such that $l\vee a$ and $l\vee b\in F$. This means that there exist elements $s,t\in S$ such that $s\leqslant l\vee a,t\leqslant l\vee b$. Since $L$ satisfies the monotonicity condition (Lemma \ref{dist-implies-mono}), we have 
	$$
	st\leqslant (l\vee a)(l\vee b)=l^2\vee lb \vee al\vee ab\leq l\vee ab.
	$$
	Now $S$ is an $m$-system, hence there is an element $u\in  S$ such that $u\leqslant st$, and the conclusion follows. 
	
	(3). It suffices to apply Part (2) and Proposition \ref{prime-ideal-principle}.

\end{proof}

\section{Some intervals and their spectra}

\begin{Rem}\label{intervalli}
Let $L$ be a multiplicative lattice and let $x\leqslant y$ be elements of $L$. Then the interval $[x,y]_L:=\{z\in L\mid x\leq z\leq y \}$ is a multiplicative lattice with respect to the operation $\ast$ on $[x,y]_L$ defined by $z\ast z':=(zz')\vee x$ for every $z,z'\in [x,y]_L$. Notice that in the case $x=0$, the multiplication $\ast$ is the restriction of the multiplication on $L$ to $[x,y]_L$. 
\end{Rem}

We say that a multiplicative lattice is {\em prime} is $0$ is a prime element of $L$, that is, if $x,y\in L$ and  $xy=0$, then either $x=0$ or $y=0$.

\begin{Lemma}\label{chiusi-sono-spettri}
Let $L$ be a multiplicative lattice in which $m$-distributivity holds and let $l \in L$. Consider the multiplicative lattice  $A_l :=[l ,1]_L$, with respect to the multiplication $\ast$ defined in Remark \ref{intervalli}. Then $\Spec(A_l )=\V_L(l )$. In particular, $A_l $ is a prime multiplicative lattice if and only if $l \in \Spec(L)$.
\end{Lemma}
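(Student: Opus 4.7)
The plan is to prove the two inclusions $\Spec(A_l)\subseteq \V_L(l)$ and $\V_L(l)\subseteq \Spec(A_l)$ directly from the definition of primeness, and then read off the ``in particular'' part by specializing to the bottom element $l$ of $A_l$.

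First I would check that $\V_L(l)\subseteq \Spec(A_l)$. Take $p\in \V_L(l)$, so $l\leqslant p<1$ and $p$ is prime in $L$. Since $l\leqslant p\leqslant 1$, we have $p\in A_l$, and $p\ne 1$ in $A_l$. Suppose $z,z'\in A_l$ satisfy $z\ast z'\leqslant p$. By the definition of $\ast$, this says $(zz')\vee l\leqslant p$, hence in particular $zz'\leqslant p$. Primeness of $p$ in $L$ gives $z\leqslant p$ or $z'\leqslant p$, as required.

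For the reverse inclusion $\Spec(A_l)\subseteq \V_L(l)$, let $p\in \Spec(A_l)$. Then $p\in A_l$, so $l\leqslant p$, and $p\ne 1$. It remains to verify that $p$ is prime in $L$. Take $x,y\in L$ with $xy\leqslant p$; the natural candidates to push into $A_l$ are $x\vee l$ and $y\vee l$. Using $m$-distributivity I would expand
\begin{equation*}
(x\vee l)(y\vee l)=xy\vee xl\vee ly\vee l^2,
\end{equation*}
and then observe that $xl\leqslant l$, $ly\leqslant l$ and $l^2\leqslant l$ hold because the multiplication of $L$ satisfies $uv\leqslant u\wedge v$. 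Hence $(x\vee l)(y\vee l)\leqslant xy\vee l\leqslant p$, so in $A_l$ we obtain $(x\vee l)\ast (y\vee l)=((x\vee l)(y\vee l))\vee l\leqslant p$. Primeness in $A_l$ then forces $x\vee l\leqslant p$ or $y\vee l\leqslant p$, which yields $x\leqslant p$ or $y\leqslant p$.

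The small technical point where care is needed is precisely this reduction $(x\vee l)(y\vee l)\leqslant xy\vee l$: it uses both $m$-distributivity (to expand the product) and the defining inequality $uv\leqslant u\wedge v$ of a multiplicative lattice (to absorb the three ``cross'' terms into $l$). For the final assertion, observe that $A_l$ is a prime multiplicative lattice precisely when $0_{A_l}=l$ is a prime element of $A_l$. Since trivially $l\in \V_L(l)$, the equality $\Spec(A_l)=\V_L(l)$ already proved shows that $l\in \Spec(A_l)$ if and only if $l\in \Spec(L)$, completing the proof.
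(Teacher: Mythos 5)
Your proposal is correct and follows essentially the same route as the paper's proof: both inclusions are checked directly from the definitions, using $xy\leqslant x\ast y$ for one direction and the $m$-distributive expansion of $(x\vee l)(y\vee l)$ together with absorption of the cross terms into $l$ for the other. The only difference is that you spell out the ``in particular'' statement, which the paper dismisses as trivial.
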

\begin{proof}
Let $p\in \V_L(l )$ (in particular, $p\in A_l $) and let $x,y,\in A_l $ be such that $x\ast y\leqslant p$. Since $xy\leqslant x\ast y$ and $p\in \Spec(L)$, it follows that either $x\leqslant p$ or $y\leqslant p$. Conversely, let $p\in \Spec(A_l )$ and let $a,b\in L$ be such that $ab\leqslant p$. Then the elements $a\vee l ,\ b\vee l \in A_l $ satisfy the inequality 
$$
(a\vee l )\ast (b\vee l )=(ab\vee al  \vee l  b\vee l ^2)\vee l \leqslant p 
$$
and thus, since $p\in \Spec(A_l )$, we infer that $a\vee l \leqslant p$ or $b\vee l  \leqslant p$. It immediately follows that $p\in \V_L(l )$. 

The last statement is now trivial.  
\end{proof}
 An element $x$ of a lattice $L$ is maximal if and only if the interval $[x,1]_L$ has two elements.

Notice that in a multiplicative lattice with exactly two elements $0$ and $1$ one always has $x\cdot 0=0\cdot x=0$ for every $x$. As far as $1\cdot 1$ is concerned, two cases can occur: either $1\cdot 1=1$ or $1\cdot 1=0$. Correspondingly, an element $x$ of a multiplicative lattice $L$ in which $m$-distributivity holds is prime and maximal if and only if the interval $[x,1]$ is  the multiplicative lattice with two elements and with $1\cdot 1=1$. The element $x$ is maximal but not prime  if and only if the interval $[x,1]$ is  the multiplicative lattice with two elements and with $1\cdot 1=0$.

The {\em Jacobson radical} of a multiplicative lattice $L$ is join of the set of all the maximal element of $L$ that are prime elements of $L$.

\section{A topological description of $m$-systems}

We say that an $m$-system $S$ is {\em saturated} if for every $x\in S,\
c\in C(L), \ x\leqslant c$ implies $c\in S$. Every $m$-system $S$ is contained
in a smallest (generates) a saturated $m$-system, consisting of all the
elements $c\in C(L)$ such that $ x\leqslant c$ for some $x\in S$.

\begin{Rem}\label{algebraic-basis}
Let $L$ be an algebraic multiplicative lattice. Then, by the proof of \cite[Theorem 4.4]{FFJ}, the open sets $\D(c)$ of $\Spec(L)$, where $c$ is compact, form a basis for the Zariski topology. 
\end{Rem}
\begin{prop}\label{compact}
	Let $L$ be an algebraic $m$-distributive multiplicative lattice and let $Y\subseteq \Spec(L)$ be compact, with respect to the Zariski topology. Then 
	$$
	S_Y:=\{c\in C(L)\mid c\nleqslant p \mbox{ for all } p\in Y \}
	$$
	is a saturated $m$-system.
\end{prop}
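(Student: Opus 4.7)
The plan is to verify three things about $S_Y$: that it is nonempty, that it satisfies the $m$-system condition, and that it is saturated. The key reformulation is that $c\in S_Y$ if and only if $Y\subseteq \D(c)$, so the proof will consist of moving between ``no element of $Y$ lies above $c$'' and ``$Y$ is covered by the basic open $\D(c)$''. By Remark \ref{algebraic-basis}, the sets $\D(c)$ with $c\in C(L)$ form a basis for the Zariski topology, and compactness of $Y$ is the tool that will let us replace a generic product by a compact element below it.

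For nonemptiness, I would observe that by algebraicity $1=\bigvee C(L)$, so $\Spec(L)=\D(1)=\bigcup_{c\in C(L)}\D(c)$, using the identity $\D(\bigvee_i x_i)=\bigcup_i \D(x_i)$. Since $Y$ is compact, finitely many $c_1,\dots,c_n\in C(L)$ cover $Y$, and then $c:=c_1\vee \cdots \vee c_n$ is again compact with $Y\subseteq \D(c_1)\cup\cdots\cup \D(c_n)=\D(c)$, so $c\in S_Y$.

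For the $m$-system property, take $c,c'\in S_Y$. For every $p\in Y$ we have $c\nleqslant p$ and $c'\nleqslant p$; since $p$ is prime this forces $cc'\nleqslant p$, that is, $Y\subseteq \D(cc')$. Now algebraicity gives $cc'=\bigvee\{d\in C(L)\mid d\leqslant cc'\}$, hence
$$
\D(cc')=\bigcup_{\substack{d\in C(L)\\ d\leqslant cc'}}\D(d).
$$
Compactness of $Y$ produces finitely many compact $d_1,\dots,d_n$ with $d_i\leqslant cc'$ and $Y\subseteq \D(d_1)\cup\cdots\cup \D(d_n)$. Setting $d:=d_1\vee\cdots\vee d_n$, the element $d$ is compact, satisfies $d\leqslant cc'$, and lies in $\D(d_1)\cup\cdots\cup \D(d_n)=\D(d)$-cover of $Y$, i.e., $d\in S_Y$ with $d\leqslant cc'$, as required.

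Finally, saturation is essentially tautological: if $c\in S_Y$, $c'\in C(L)$ and $c\leqslant c'$, then for any $p\in Y$ the inequality $c'\leqslant p$ would force $c\leqslant p$, contradicting $c\in S_Y$; hence $c'\in S_Y$. The only slightly delicate step is the second one, where the point is to use compactness of $Y$ to pass from the (possibly non-compact) product $cc'$ to a compact element $d\leqslant cc'$ whose basic open still contains $Y$; everything else is a manipulation of the basis of compact opens together with primeness of the elements of $Y$.
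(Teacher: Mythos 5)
Your proof is correct and uses exactly the same ingredients as the paper's (compactness of $Y$, algebraicity to write $1$ and the product as joins of compact elements, and primeness of the elements of $Y$); the only difference is that you phrase the compactness argument directly via finite subcovers of basic opens $\D(c)$, whereas the paper argues by contradiction via the finite intersection property of the closed sets $Y\cap\V(c)$ and extracts a single prime above all relevant compacts. These are dual formulations of the same argument, so no changes are needed.
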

\begin{proof}
	If $S_Y=\emptyset$, by definition $\V(c)\cap Y$ is nonempty for every $c\in C(L)$. Thus the family $\mathcal F:=\{\V(c)\cap Y\mid c\in C(L) \}$, which is clearly closed under finite intersections, is a family of closed subsets of $Y$ with the finite intersection property. Since $Y$ is compact, we can pick a prime element $p_0\in Y\cap \bigcap_{c\in C(L)}\V(c)$. Thus $p_0\geqslant \bigvee C(L)=1$ (this equality holds because $L$ is algebraic), and a fortiori $p_0=1$, a contradiction. It follows that $S_Y\neq \emptyset$. 
	
	Now take elements $x,y\in S_Y$ and assume, by contradiction, that for every compact element $c\in L$ such that $c\leqslant xy$ we have $c\leqslant p$ for some $p\in Y$. In other words, the collection $\mathcal G:=\{Y\cap \V(c)\mid c\in C(L) \mbox{ and  }c\leq xy \}$ of closed subsets of $Y$ has the finite intersection property. Again, the compactness of $Y$ implies that there exists a prime $p_1\in Y$ such that $p_1\geqslant c$ for every compact element $c$ such that $c\leqslant xy$. Since $xy$ is the supremum of a set of compact elements because $L$ is algebraic, we infer that
	$$
	xy=\bigvee\{c\in C(L)\mid c\leqslant xy \}\leqslant p_1.  
	$$
	Since $p_1$ is prime, it follows that either $x\leqslant p_1$ or $y\leqslant p_1$, contradicting that $x,y\in S_Y$. 
	It is now clear that $S_Y$ is saturated. 
\end{proof}

\begin{teor}\label{alberto-wow} Let $L$ be an algebraic $m$-distributive multiplicative 
	lattice, and let $S$ be a subset of $C(L)$. Then the following conditions are equivalent. 

{\rm (1)} $S$ is a saturated $m$-system.

{\rm (2)} The subspace $P(S):=\bigcap_{s\in S}\D(s)$ of $\Spec(L)$ is compact  and $S=S_{P(S)}$. 
\end{teor}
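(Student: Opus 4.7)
The implication $(2)\Rightarrow(1)$ is immediate from Proposition~\ref{compact}: compactness of $P(S)$ makes $S_{P(S)}$ a saturated $m$-system, and the assumption $S=S_{P(S)}$ transfers this to $S$. For $(1)\Rightarrow(2)$ I would centre the proof on the set $\Sigma:=\{l\in L\mid s\nleqslant l\text{ for every }s\in S\}$ from Proposition~\ref{max}, leveraging two elementary properties. First, $\Sigma$ is downward closed (trivial). Second, $\Sigma$ is closed under directed joins, since if $(l_j)$ is directed with each $l_j\in\Sigma$ and some $s\in S$ satisfied $s\leqslant\bigvee_j l_j$, then compactness of $s$ combined with directedness would force $s\leqslant l_j$ for some $j$, contradicting $l_j\in\Sigma$. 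Since $S$ is nonempty, $1\notin\Sigma$, so any maximal element of $\Sigma$ is prime by Proposition~\ref{max}(3). Consequently, whenever I exhibit some $l_0\in\Sigma$, a Zorn argument inside $\Sigma\cap[l_0,1]$ (chains have upper bounds by the directed-join property) produces a prime $p\in P(S)$ with $p\geqslant l_0$.

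For $S=S_{P(S)}$ the inclusion $S\subseteq S_{P(S)}$ is definitional. Conversely, if $c\in S_{P(S)}\setminus S$ then $c\in C(L)\setminus S$, and the saturation of $S$ forces $s\nleqslant c$ for every $s\in S$ (otherwise the implication ``$s\in S$, $c\in C(L)$, $s\leqslant c$'' would place $c$ in $S$); hence $c\in\Sigma$, and the template above delivers a prime $p\in P(S)$ with $c\leqslant p$, contradicting $c\in S_{P(S)}$.

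For compactness of $P(S)$, Remark~\ref{algebraic-basis} lets me consider only covers $P(S)\subseteq\bigcup_{i\in I}\D(c_i)$ with $c_i\in C(L)$. Assuming no finite subcover exists, each finite $F\subseteq I$ produces a point $p_F\in P(S)\cap\bigcap_{i\in F}\V(c_i)$, so $\bigvee_{i\in F}c_i\leqslant p_F\in\Sigma$, and downward closure gives $\bigvee_{i\in F}c_i\in\Sigma$. These finite joins form a directed family whose supremum is $c^\ast:=\bigvee_{i\in I}c_i$, so closure under directed joins yields $c^\ast\in\Sigma$; the template then delivers a prime $p\in P(S)$ with $c_i\leqslant p$ for every $i$, contradicting the cover assumption. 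The edge case $0\in S$ is harmless, since saturation then forces $S=C(L)$, whence $P(S)\subseteq\D(0)=\emptyset$ makes both halves of~(2) trivial. The main technical point is recognising precisely that the finite joins $\bigvee_{i\in F}c_i$ are the directed family one must push into $\Sigma$ in order to invoke Proposition~\ref{max}(3) and refute the absence of a finite subcover; once this is seen, both halves of~(2) follow from the same Zorn--plus--Proposition~\ref{max}(3) template.
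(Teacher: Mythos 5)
Your proof is correct and follows essentially the same route as the paper: $(2)\Rightarrow(1)$ via Proposition~\ref{compact}, and $(1)\Rightarrow(2)$ by applying Zorn's Lemma inside the set $\Sigma$ and invoking Proposition~\ref{max}(3) to get primes in $P(S)$, both for the equality $S=S_{P(S)}$ and for compactness. The only (harmless) difference is that in the compactness half you reuse the single ``every element of $\Sigma$ lies below a prime of $P(S)$'' template by pushing the directed family of finite joins $\bigvee_{i\in F}c_i$ into $\Sigma$, whereas the paper re-runs the Ako-subset verification for a modified $\Sigma$; your version is a mild streamlining of the same argument.
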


\begin{proof} (1) implies (2).  Let $S$ be a saturated $m$-system and let $P:=P(S)$.   First, suppose $0\in S$. Then $S=C(L)$ and, since $L$ is algebraic, $P=\emptyset$ and obviously $S=S_{\emptyset}$. 
	
	Now assume that $0\notin S$. We first show that 
	$$S=\{\,c\in C(L)\mid c\nleqslant p \mbox{ for all }  p\in P\,\}.$$	
	
	$(\subseteq)$ If $s\in S$, then $s\in C(L)$. Suppose by contradiction
	that $s\le\overline{p}$ for some $\overline{p}\in P$. Then $s\in
	S\cap\{\,c\in C(L)\mid c\leqslant \overline{p}\,\}$, and this contradicts the
	fact that $\overline{p}\in P$.
	
	$(\supseteq)$ Consider the set $T:=\{\,y\in L\mid S\cap\{\, d\in
	C(L)\mid d\leqslant y\,\}=\emptyset\,\}$. Fix an element $c\in C(L)$ such that
	$c\nleqslant p$ for all $p\in P$. Consider the set $$U:=\{\,y\in T\mid
	c\leqslant y\,\}.$$
	
	If $U\ne\emptyset$, it
	is possible to apply Zorn's Lemma to the nonempty set $U$, because if
	$\{\,u_\lambda\mid \lambda\in\Lambda\,\}$ is a nonempty chain in $U$,
	then $u:=\bigvee_{\lambda\in \Lambda }u_\lambda \in L$ is in $U$, because
	if $S\cap\{\, d\in C(L)\mid d\leqslant u\,\}\neq\emptyset$, there exists
	$\overline{d}\in S$ such that $\overline{d}\le
	u=\bigvee_{\lambda\in\Lambda}u_\lambda\in L$. But $\overline{d}$ is
	compact, hence there exists $\lambda_0\in\Lambda$ such that
	$\overline{d}\leqslant u_{\lambda_0}$. Thus $S\cap\{\, d\in C(L)\mid d\le
	u_{\lambda_0}\,\}\ne\emptyset$, which is a contradiction. This shows
	that it is possible to apply Zorn's Lemma to $U$, hence there is a
	maximal element $\overline{p}$ in $U$, which is a maximal element in
	$T$, hence $\overline{p}$ is prime by Proposition~\ref{max}(3). Thus
	$\overline{p}\in\Spec(L)$. Since $\overline{p}\in T$, we know that
	$S\cap\{\, d\in C(L)\mid d\leqslant \overline{p}\,\}=\emptyset$. It follows
	that $\overline{p}\in P$. In particular $c\nleqslant \overline{p}$. But
	$\overline{p}\in U$, so $c\le\overline{p}$, a contradiction. This shows
	that $U=\emptyset$.
	
	From $U=\emptyset$, it follows that $c\notin T$, so $S\cap\{\, d\in
	C(L)\mid d\leqslant c\,\}\ne\emptyset$. Let $s\in S$ be an element in this
	intersection. Then $s\leqslant c$. Since $S$ is saturated, we get that $c\in
	S$, as desired. 

Now we prove that $P$ is compact. Let $\mathcal F:=\{P\cap \V(c_i)\mid i\in I \}$ be a family of closed subsets of $P$ with the finite intersection property. By Remark \ref{algebraic-basis} we can assume that $c_i\in C(L)$ for every $i\in I$. Set
$$
\Sigma:=\{x\in L\mid x\in L\mid c_i\leqslant x \mbox{ and } s\nleqslant x \mbox{ for every }i\in I,\ s\in S \}.
$$
First notice that $\Sigma\neq \emptyset$, because $c:=\bigvee_{i\in I}c_i\in \Sigma$: indeed, if $s\leqslant c$ for some $s\in S$, then $s\leqslant \bigvee_{j\in J}c_j$ for a finite subset $J$ of $I$, since $s$ is compact. Now $\mathcal F$ has the finite intersection property, so there is a prime $p_0\in P\cap \bigcap_{j\in J}\V(c_j)$, and thus $p_0\geqslant \bigvee_{j\in J}c_j\geqslant s$, against the fact that $p_0\in P$. Now let $C\subseteq \Sigma$ be a chain and set $c:=\bigvee C$. If $s\leqslant c$ for some $s\in S$, then $s\leqslant \bigvee H$ for some finite subset $H$ of $C$, and since $H$ is totally ordered we get $s\leqslant h$ for some $h\in H$, against the fact that $H\subseteq \Sigma$. Thus, by Zorn's Lemma, the set $\Sigma$ has a maximal element $p$. We claim that $p$ is prime. In view of Proposition \ref{prime-ideal-principle} it suffices to show that $F:=L\setminus \Sigma$ is a $C(L)$-Ako subset. Since $S\neq \emptyset$, we have $1\in F$. Take elements $l\in L$ and $a,b\in C(L)$ such that $l\vee a,l\vee b\in F$ and assume that $l\vee ab\in \Sigma$, that is, $c_i\leqslant l\vee ab$ and $s\nleqslant l\vee ab$ for every $i\in I$ and $s\in S$. Since $l\vee ab\leqslant( l\vee a)\wedge (l\vee b)$, and $l\vee a,l\vee b\in F$, the only possibility is that $s\leqslant l\vee a$ and $t\leqslant l\vee b$ for some $s,t\in S$. Since $L$ is $m$-distributive (and hence satisfies the monotonicity condition), we have
$$
u\leqslant st\leqslant (l\vee a)(l\vee b)=l^2\vee lb\vee al\vee ab\leqslant l\vee ab
$$
for some $u\in S$, against the assumption that $l\vee ab\in \Sigma$. Thus $F$ is a $C(L)$-Ako subset, $p\in \Spec(L)$ and, by construction, $p$ belongs to the intersection of $\mathcal F$. This proves that $P$ is compact. 

(2) implies (1) follows from Proposition \ref{compact}. \end{proof}

Let $L$ be a multiplicative lattice, and let $\mathcal T$ be the topology on $L$ whose basic open sets are the sets of the type $\V(c)$ for every compact element $c$ of $L$. For every subset $X$ of $\Spec(L)$, let $\overline{X}^{\mathcal T}$ denote the closure of $X$ with respect to $\mathcal T$. 
\begin{Lemma}\label{chiusura-uguaglianza}
Let $L$ be a multiplicative lattice. Then, given subsets $X,Y$ of $\Spec(L)$, $S_X=S_Y$ if and only if $\overline{X}^{\mathcal T}=\overline Y^{\mathcal T}$. 
\end{Lemma}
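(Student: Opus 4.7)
My plan is to unwind the definition of $S_X$ in topological terms relative to $\mathcal T$ and observe that membership in $S_X$ is actually a property of the $\mathcal T$-closure of $X$. The central observation is that
$$c\in S_X \iff X\cap \V(c)=\emptyset \iff X\subseteq \D(c).$$
Since the basic open sets of $\mathcal T$ are the sets $\V(c)$ with $c\in C(L)$, each $\D(c)=\Spec(L)\setminus \V(c)$ is $\mathcal T$-closed; conversely, every $\mathcal T$-closed subset of $\Spec(L)$ is an intersection of sets of the form $\D(c)$ with $c\in C(L)$.

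From this description of $\mathcal T$-closed sets I will deduce the key formula
$$\overline{X}^{\mathcal T}=\bigcap_{c\in S_X}\D(c),$$
by noting that on the right-hand side we are intersecting exactly those basic closed sets of $\mathcal T$ that contain $X$. Once this is established, the implication ($S_X=S_Y \Rightarrow \overline{X}^{\mathcal T}=\overline{Y}^{\mathcal T}$) is immediate, because the right-hand side of the displayed formula depends only on $S_X$ (resp.\ $S_Y$).

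For the converse implication, I would argue equally directly: if $c\in S_X$, then $X\subseteq \D(c)$, and because $\D(c)$ is $\mathcal T$-closed it must also contain $\overline{X}^{\mathcal T}$. Hence $c\in S_X$ if and only if $\overline{X}^{\mathcal T}\subseteq \D(c)$, and likewise $c\in S_Y$ if and only if $\overline{Y}^{\mathcal T}\subseteq \D(c)$. If the two closures coincide, then so do the two saturated systems. As a byproduct one gets $S_X=S_{\overline{X}^{\mathcal T}}$ for free, which is the conceptual statement underlying the lemma.

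There is really no genuine obstacle here: the whole proof is a bookkeeping exercise once one recognizes that $\mathcal T$ is precisely engineered so that the $\D(c)$'s form a subbasis (in fact, a basis) for the closed sets and so that ``$c$ avoids $X$ pointwise'' translates into ``$X$ lies inside the $\mathcal T$-closed set $\D(c)$''. The only place one must be a little careful is in verifying that the displayed formula for $\overline{X}^{\mathcal T}$ actually describes the closure, rather than some larger closed set; this follows because an arbitrary $\mathcal T$-closed set containing $X$ is, by definition of the basis, an intersection of $\D(c)$'s, and any such $\D(c)$ containing $X$ corresponds to an element of $S_X$.
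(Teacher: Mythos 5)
Your argument is correct and is essentially the paper's own proof: both rest on the observations that $c\in S_X$ exactly when $X\subseteq \D(c)$ and that the $\mathcal T$-closed sets are precisely the intersections $\bigcap_{c\in C}\D(c)$ with $C\subseteq C(L)$, so that $\overline{X}^{\mathcal T}=\bigcap_{c\in S_X}\D(c)$ and each of $S_X$, $\overline{X}^{\mathcal T}$ determines the other. You simply spell out the bookkeeping that the paper compresses into ``the conclusion follows immediately.''
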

\begin{proof}
By definition, $S_X=\{c\in C(L)\mid X\subseteq \D(c) \}$. Since every closed subset of $\Spec(L)$, with respect to $\mathcal T$, is of the type $\bigcap_{c \in C}\D(c)$ for some $C\subseteq C(L)$, the conclusion follows immediately. 
\end{proof}
\begin{cor}\label{compact-saturated}
Let $L$ be an algebraic $m$-distributive multiplicative lattice. Then the following properties hold. 

{\rm (1)} If $X\subseteq \Spec(L)$, then $S_X$ is a saturated $m$-system if and only if $\overline{X}^{\mathcal T}$ is compact with respect to the Zariski topology.

	{\rm (2)} Let $\mathcal H(\Spec(L))$ be the family of all the compact saturated subsets of $\Spec(L)$, with respect to the Zariski topology (saturated $:=$ intersection of open sets), and let $\mathcal M(L)$ be the collection of all the saturated $m$-systems of $L$. 
	
	{\rm (a)} $\mathcal H(\Spec(L))$ consists of closed subsets, with respect to the topology~$\mathcal T$. 
		
		{\rm (b)}   The mappings 
		$$
		\varphi: \mathcal H(\Spec(L))\to \mathcal M(L) , \qquad X\mapsto S_X	$$
		$$
		\psi: \mathcal M(L)\to \mathcal H(\Spec(L)), \qquad S\mapsto P(S)
		$$
		are mutually inverse inclusion reversing bijections. 
\end{cor}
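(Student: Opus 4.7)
The key observation underlying everything is that for any $X \subseteq \Spec(L)$ one has $\overline{X}^{\mathcal T} = P(S_X)$. Indeed, the $\mathcal T$-closed sets are exactly the arbitrary intersections of basic closed sets $\D(c)$ with $c \in C(L)$, and the condition $X \subseteq \D(c)$ translates to $c \in S_X$; so $\overline{X}^{\mathcal T}$ is the intersection of the $\D(c)$ with $c \in S_X$, which is precisely $P(S_X)$. Once this identity is in place, the two halves of Theorem~\ref{alberto-wow} become statements about $\overline{X}^{\mathcal T}$.

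For (1), the forward direction is immediate: if $S_X$ is a saturated $m$-system, then Theorem~\ref{alberto-wow} says $P(S_X) = \overline{X}^{\mathcal T}$ is Zariski-compact. Conversely, if $\overline{X}^{\mathcal T}$ is Zariski-compact, Proposition~\ref{compact} applies to it, producing a saturated $m$-system $S_{\overline{X}^{\mathcal T}}$, and Lemma~\ref{chiusura-uguaglianza} identifies this with $S_X$.

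For (2)(a), I would prove the stronger fact that every Zariski-compact, Zariski-saturated $Y \subseteq \Spec(L)$ satisfies $Y = \overline{Y}^{\mathcal T}$. The nontrivial inclusion $\overline{Y}^{\mathcal T} \subseteq Y$ goes as follows: given $p \notin Y$, saturation supplies a Zariski-open $U \supseteq Y$ with $p \notin U$; by Remark~\ref{algebraic-basis}, $U$ is a union of basic opens $\D(c_j)$, and Zariski-compactness of $Y$ extracts a finite subcover $\D(c_{j_1}), \ldots, \D(c_{j_n})$. The pivotal step is to set $c := c_{j_1} \vee \cdots \vee c_{j_n}$: in the algebraic lattice $L$ this element is compact, and $\D(c) = \bigcup_{i=1}^n \D(c_{j_i})$, so $Y \subseteq \D(c)$ while $p \in \V(c)$, hence $p \notin P(S_Y) = \overline{Y}^{\mathcal T}$.

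Finally, (2)(b) becomes formal. Well-definedness of $\varphi$ is Proposition~\ref{compact}, and well-definedness of $\psi$ combines the compactness half of Theorem~\ref{alberto-wow} with the observation that $P(S)$ is, by construction, an intersection of Zariski-open sets, hence Zariski-saturated. The identity $\varphi \circ \psi = \mathrm{id}$ is the equality $S = S_{P(S)}$ from Theorem~\ref{alberto-wow}, and $\psi \circ \varphi = \mathrm{id}$ reduces through the key observation to (2)(a), giving $P(S_X) = \overline{X}^{\mathcal T} = X$. Inclusion-reversal of both maps is a direct unraveling of the definitions of $S_X$ and $P(S)$. I expect the finite-join-of-compacts argument in (2)(a) to be the main technical point, since it is precisely what upgrades Zariski-compactness plus topological saturation to $\mathcal T$-closedness.
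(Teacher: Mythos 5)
Your proof is correct and follows essentially the same route as the paper's: the identity $\overline{X}^{\mathcal T}=P(S_X)$, the appeal to Theorem~\ref{alberto-wow} and Proposition~\ref{compact} for part (1) and for the two composite identities in (2)(b), and the finite-subcover/finite-join-of-compacts argument for (2)(a) (which the paper phrases by writing $K=\bigcap_i\D(\bigvee F_i)$ rather than by separating a point $p\notin Y$, but the content is identical). No gaps.
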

\begin{proof}
(1). If $S_X$ is a saturated $m$-system, then 
$$P(S_X)=\bigcap_{s\in S_X}\D(s)=\bigcap_{X\subseteq \D(s),  s\in C(L)}\D(s) =\overline{X}^{\mathcal T}
$$
is compact by Theorem \ref{alberto-wow}. Conversely,  $S_X=S_{\overline{X}^{\mathcal T}}$, by Lemma \ref{chiusura-uguaglianza}, and thus the conclusion follows again  by Theorem \ref{alberto-wow}.

(2). We first prove part (a). Let $K$ be a compact saturated subset of $\Spec(L)$. Thus $K=\bigcap_{i\in I}U_i$, where $U_i$ is open in $\Spec(L)$. Since $L$ is algebraic, each $U_i$ is a union of open sets of the type $\D(c)$, where $c\in C(L)$, and such a union covers the compact space $K$. It follows that for each $i$, there is a finite subset $F_i$ of $C(L)$ such that $K\subseteq \bigcup_{c\in F_i}\D(c)=\D(\bigvee F_i)\subseteq U_i$. A fortiori, $K=\bigcap_{i\in I}\D(\bigvee F_i)$ and since each $\bigvee F_i$ is compact the conclusion follows. 

Part (b) is straightforward. If $X\in \mathcal H(\Spec(L))$, then $\psi\varphi(X)=P(S_X)=\overline{X}^{\mathcal T}=X$ (the last equality follows from part (a)). Finally, if $S\in \mathcal M(L)$, then $\varphi\psi(S)=S_{P(S)}=S$ by Theorem \ref{alberto-wow}. 
\end{proof}

\begin{Rem}\label{spectral-gadgets}
Let $X$ be a spectral space and $\mathcal S$ its set of open subsets. 

(1)  Recall that the \textit{inverse topology} of $X$ is the topology for which a basis of closed sets is the family of open and compact subspaces of $X$ (with respect to $\mathcal S$). The \emph{constructible (or patch) topology} on $X$ is the coarsest topology  on $X$ such that the open compact subsets of $X$ with respect to $\mathcal S$, are the clopen sets in the constructible topology. It is well known that the constructible topology is compact (see, for instance, \cite[Theorem 1]{ho}). By definition, the constructible topology is finer than both $\mathcal S$ and its inverse. It follows that a closed set in the inverse topology of $X$ is compact in the constructible topology (because it is closed in the constructible topology), and therefore a fortiori it is compact with respect to $\mathcal S$.  

(2) Assume that $L$ is a multiplicative lattice and that $\Spec(L)$ is a spectral space for which $\{\D(c)\mid c\in C(L) \}$ is a basis of open compact subsets (this happens whenever the assumptions of \cite[Theorem 4.4]{FFJ} are satisfied). Then, by definition, the topology $\mathcal T$ of $\Spec(L)$ given in Lemma \ref{chiusura-uguaglianza} is exactly the inverse of the Zariski topology. Moreover, the set of closed subsets of $\Spec(L)$ in the inverse topology is  $\mathcal H(\Spec(L))$, in view of \cite[Lemma 2.1]{FFS-smithpowerdomain}. Furthermore, if $\mathcal H(\Spec(L))$ is endowed with the upper Vietoris topology (that is, the topology for which a subbasis of open sets is given by the sets $\{K\in \mathcal H(\Spec(L))\mid K\subseteq \Omega \}$, where $\Omega$ runs in the family of compact open subsets of $\Spec(L)$) and $\mathcal M(L)$ is equipped with the topology generated by the sets of the type $\{S\in \mathcal M(L)\mid c\in S \}$ (where $c\in C(L)$), then it is immediately seen that the mapping $\varphi$ is a homeomorphism. In particular, $\mathcal M(L)$ is a spectral space, because so is $\mathcal H(\Spec(L))$ by \cite[Proposition 3.1 and Theorem 3.4]{FFS-smithpowerdomain}. 
\end{Rem}

\section{Direct product of multiplicative lattices}

Let $L_1,L_2$ be two multiplicative lattices, and consider their direct product $L_1\times L_2$, in which the order is defined componentwise, and the product is $(\ell_1,\ell_2)(\ell'_1,\ell'_2)=(\ell_1\ell'_1,\ell_2\ell'_2)$. Then $(1,0)(0,1)=(0,0)\leqslant p$ for every prime element $p$ of $L_1\times L_2$. Hence either $p=(p_1,1)$ or $p=(1,p_2)$. The following result follows easily:

\begin{prop} Let $L_1,L_2$ be two multiplicative lattices. Then $\Spec(L_1\times L_2)$ is the disjoint union of two clopen subsets homeomorphic to $\Spec(L_1)$ and $\Spec(L_2)$ respectively.\end{prop}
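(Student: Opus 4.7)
The plan is to build an explicit bijection $\Spec(L_1)\sqcup\Spec(L_2)\to\Spec(L_1\times L_2)$ given by $p_1\mapsto(p_1,1)$ on the first summand and $p_2\mapsto(1,p_2)$ on the second, and then to verify that both maps are homeomorphisms onto clopen complementary subspaces.

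First I would check that $(p_1,1)\in\Spec(L_1\times L_2)$ if and only if $p_1\in\Spec(L_1)$, and symmetrically for the second coordinate. For the ``if'' direction, any inequality $(a_1,a_2)(b_1,b_2)\leqslant(p_1,1)$ reduces, on the first coordinate, to $a_1b_1\leqslant p_1$ (the second-coordinate condition $a_2b_2\leqslant 1$ being automatic), so primality of $p_1$ forces $a_1\leqslant p_1$ or $b_1\leqslant p_1$, i.e.\ $(a_1,a_2)\leqslant(p_1,1)$ or $(b_1,b_2)\leqslant(p_1,1)$; and $(p_1,1)\neq(1,1)$ since $p_1\neq 1$. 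For the ``only if'' direction, given $a_1b_1\leqslant p_1$ one tests primality of $(p_1,1)$ against $((a_1,0),(b_1,0))$. Combined with the observation recalled immediately before the statement, every prime of $L_1\times L_2$ has the form $(p_1,1)$ with $p_1\in\Spec(L_1)$ or $(1,p_2)$ with $p_2\in\Spec(L_2)$, and the two descriptions are disjoint because $(1,1)\notin\Spec(L_1\times L_2)$. Hence the proposed map is a bijection.

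Next I would read off the Zariski topology on $\Spec(L_1\times L_2)$. A direct unpacking of the order on the product yields, for every $(x_1,x_2)\in L_1\times L_2$,
\[
\V_{L_1\times L_2}((x_1,x_2))=\{(p_1,1)\mid p_1\in\V_{L_1}(x_1)\}\cup\{(1,p_2)\mid p_2\in\V_{L_2}(x_2)\},
\]
which under the bijection corresponds exactly to $\V_{L_1}(x_1)\sqcup\V_{L_2}(x_2)$. Specialising to $(x_1,x_2)=(0,1)$ and $(1,0)$ respectively shows that the two images are mutually complementary closed subsets, hence both clopen. The same displayed formula exhibits every closed subset of $\Spec(L_1\times L_2)$ as a disjoint union of a closed subset on each side, which is precisely the coproduct topology on $\Spec(L_1)\sqcup\Spec(L_2)$; so the bijection is a homeomorphism whose restrictions give the required identifications.

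I do not expect a serious obstacle. The only points that require care are keeping the excluded element $(1,1)$ in mind (so that the two halves of the bijection are genuinely disjoint) and recording that the forward primality direction uses the first coordinate alone because the second-coordinate constraint is vacuous. Once these are in hand, everything reduces to a direct translation between componentwise inequalities and the definitions of primality and of the closed sets $\V$.
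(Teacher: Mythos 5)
Your proof is correct and follows essentially the same route as the paper, which simply records the key observation that $(1,0)(0,1)=(0,0)\leqslant p$ forces every prime to be of the form $(p_1,1)$ or $(1,p_2)$ and then declares that the result ``follows easily.'' You have merely filled in the routine verifications (primality transfers coordinatewise, $\V((x_1,x_2))$ splits as $\V_{L_1}(x_1)\sqcup\V_{L_2}(x_2)$, and $\V((0,1))$, $\V((1,0))$ are the two complementary clopen pieces), all of which check out.
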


\begin{prop} Let $n_1,n_2$ be two elements of a multiplicative lattice $L$. Then:

{\rm (a)} $\V(n_1)\cap \V(n_2)=\emptyset$ if and only if the interval $[n_1\vee n_2,1]_L$ is a hyperabelian multiplicative lattice.

{\rm (b)} $\V(n_1)\cup \V(n_2)=\Spec(L)$ if and only if the $n_1\cdot n_2\le\sqrt{0}$, where $\sqrt{0}:=\bigwedge\Spec(L)$ denotes the radical of the multiplicative lattice $L$.\end{prop}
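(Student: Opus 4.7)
For part (a), the plan is to reduce both conditions to statements about the spectrum of the interval $[n_1\vee n_2,1]_L$. The starting observation is the set-theoretic identity
\[
\V(n_1)\cap\V(n_2)=\{\,p\in\Spec(L)\mid n_1\leqslant p \text{ and } n_2\leqslant p\,\}=\V(n_1\vee n_2),
\]
which needs no hypothesis beyond the definition of $\V$. Next I would invoke Lemma~\ref{chiusi-sono-spettri} (which requires $m$-distributivity, implicit here since ``hyperabelian'' is only defined in that setting) to identify $\V_L(n_1\vee n_2)$ with $\Spec([n_1\vee n_2,1]_L)$. Combining these gives
\[
\V(n_1)\cap\V(n_2)=\Spec([n_1\vee n_2,1]_L),
\]
so the left-hand side is empty if and only if the interval has empty spectrum. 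The equivalence (d)$\Leftrightarrow$(a) in Theorem~\ref{hyper} then translates ``empty spectrum'' into ``hyperabelian'', finishing the argument. The key step is simply unpacking the two lemmas; there is no real obstacle, only bookkeeping.

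For part (b), I would first show the purely spectral identity $\V(n_1)\cup\V(n_2)=\V(n_1\cdot n_2)$. The inclusion $\subseteq$ holds in any multiplicative lattice because $n_1n_2\leqslant n_1\wedge n_2$, so if $p$ dominates either factor it dominates the product. For $\supseteq$, if $p\in\Spec(L)$ and $n_1n_2\leqslant p$, the primality of $p$ forces $n_1\leqslant p$ or $n_2\leqslant p$. With this identity in hand, the condition $\V(n_1)\cup\V(n_2)=\Spec(L)$ is equivalent to $\V(n_1n_2)=\Spec(L)$, i.e.\ $n_1n_2\leqslant p$ for every prime $p$, which by definition of $\sqrt{0}=\bigwedge\Spec(L)$ is exactly $n_1n_2\leqslant\sqrt{0}$.

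Both parts are short and essentially formal consequences of previously established results; the only subtlety is remembering that the ``$\Leftarrow$'' half of the identity $\V(n_1)\cup\V(n_2)=\V(n_1n_2)$ uses the defining property of a prime element, while the ``$\Leftarrow$'' direction in (a) implicitly relies on the hypothesis of $m$-distributivity inherited by the interval so that the hyperabelian machinery of Theorem~\ref{hyper} applies.
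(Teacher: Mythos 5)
Your proposal is correct and follows essentially the same route as the paper: both parts reduce to the identities $\V(n_1)\cap\V(n_2)=\V(n_1\vee n_2)$ and $\V(n_1)\cup\V(n_2)=\V(n_1\cdot n_2)$, with part (a) then passing through Lemma~\ref{chiusi-sono-spettri} and the characterization of hyperabelian lattices as those with empty spectrum, and part (b) appealing directly to the definition of $\sqrt{0}$. Your remark that the $m$-distributivity hypothesis is implicitly needed for the hyperabelian machinery is a fair observation, as the paper's statement leaves this assumption tacit.
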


\begin{proof} (a) $\V(n_1)\cap \V(n_2)=\emptyset$ if and only if $\V(n_1\vee n_2)=\emptyset$, if and only if there is no prime element $p$ of $L$ such that $p\geqslant n_1 $ and $p\geqslant n_2$, that is, if and only if the interval $[n_1\vee n_2,1]_L$ is hyperabelian, by Lemma \ref{chiusi-sono-spettri}.

(b) $\V(n_1)\cup \V(n_2)=\Spec(L)$ if and only if $\V(n_1\cdot n_2)=\Spec(L)$, that is, if and only if the $n_1\cdot n_2\leqslant\sqrt{0}$.
\end{proof}

As an immediate corollary we get:

\begin{cor} Let $n_1,n_2$ be two elements of a multiplicative lattice $L$. Then $\Spec(L)$ is the disjoint union of two clopen subsets $\V(n_1)$ and $\V(n_2)$ if and only if $n_1\cdot n_2\le\sqrt{0}$ and $[n_1\vee n_2,1]_L$ is hyperabelian.\end{cor}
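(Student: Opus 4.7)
The plan is to reduce the corollary to the preceding proposition by observing that the ``clopen'' hypothesis costs nothing extra. Indeed, $\V(n_1)$ and $\V(n_2)$ are closed in $\Spec(L)$ by the very definition of the Zariski topology, so writing $\Spec(L)$ as their disjoint union is equivalent to the conjunction of the two conditions
\[
\V(n_1)\cup\V(n_2)=\Spec(L) \qquad\text{and}\qquad \V(n_1)\cap\V(n_2)=\emptyset.
\]
Once these two equalities hold, each of $\V(n_1)$ and $\V(n_2)$ is the set-theoretic complement of the other in $\Spec(L)$, hence open, hence clopen. So the clopenness is automatic and does not need to be built into either direction of the equivalence.

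First I would observe that the right-hand side of the claimed equivalence is exactly the conjunction of the hypotheses of parts~(a) and~(b) of the preceding proposition: ``$[n_1\vee n_2,1]_L$ is hyperabelian'' matches the condition in~(a), and ``$n_1\cdot n_2\le\sqrt{0}$'' matches the condition in~(b). Then, invoking (a), the hyperabelian condition is equivalent to $\V(n_1)\cap\V(n_2)=\emptyset$, and invoking~(b), the condition $n_1\cdot n_2\le\sqrt{0}$ is equivalent to $\V(n_1)\cup\V(n_2)=\Spec(L)$. Combining these two equivalences yields the desired characterization.

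There is essentially no obstacle: the corollary is a formal consequence of the proposition together with the elementary topological remark above. The only minor point to be careful about is that one should explicitly note why the statement involves \emph{clopen} subsets while the proposition only produces closed ones; this is handled by the complement argument in the first paragraph. No appeal to $m$-distributivity, algebraicity, or monotonicity beyond what is already assumed in the preceding proposition is needed.
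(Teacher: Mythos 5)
Your proof is correct and follows exactly the route the paper intends: the corollary is stated there as an immediate consequence of combining parts (a) and (b) of the preceding proposition, and your observation that clopenness comes for free (each of the two closed sets being the complement of the other) is the only detail worth spelling out.
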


We are ready to define $n$-systems, similarly to the definition of $n$-systems in rings given by Lam \cite[p.~167]{Lam}.  An {\em $n$-system} in $L$ is a nonempty subset $S$ of $C(L)$ such that for every $x\in S$ there exists $y\in S$ such that $y\leqslant x^2$. 

\begin{Lemma} Let $(L,\vee,\cdot)$ be an algebraic multiplicative lattice that satisfies the monotonicity condition, and let $C(L)$ denote the set of all compact elements of $L$. An element $s\in L$ is semiprime if and only if $S_s:=\{\, c\in C(L)\mid c\nleqslant s\,\}$ is a $n$-system in $L$.\end{Lemma}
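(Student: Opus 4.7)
The plan is to mirror the proof of Lemma~\ref{msystem}, replacing the dichotomy ``prime/$m$-system'' with ``semiprime/$n$-system''. Both directions rely on the algebraic hypothesis to translate between an arbitrary element and the compact elements below it; the converse direction will additionally invoke monotonicity to square this translation. (I will implicitly restrict to $s\ne 1$, since the case $s=1$ is degenerate: $1$ is semiprime but $S_1=\emptyset$ is excluded from being an $n$-system by its nonemptiness requirement.)

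For the direction ``$s$ semiprime $\Rightarrow$ $S_s$ is an $n$-system'', I would start by using algebraicity to exhibit some compact element not below $s$, ensuring $S_s\ne\emptyset$. Now fix $x\in S_s$, so $x\in C(L)$ and $x\nleqslant s$. The semiprime hypothesis forces $x^2\nleqslant s$, since $x^2\leqslant s$ would entail $x\leqslant s$, a contradiction. Because $L$ is algebraic, $x^2$ is the join of the compact elements $c\leqslant x^2$; if every such $c$ satisfied $c\leqslant s$, then $x^2$ itself would lie below $s$. Hence there exists a compact $y\leqslant x^2$ with $y\nleqslant s$, i.e.\ $y\in S_s$, which is precisely what the $n$-system condition requires.

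For the converse ``$S_s$ is an $n$-system $\Rightarrow$ $s$ is semiprime'', I would argue by contradiction. Suppose $x^2\leqslant s$ but $x\nleqslant s$ for some $x\in L$. Algebraicity again produces a compact element $c\leqslant x$ with $c\nleqslant s$ (otherwise $x$ would be the join of compacts all lying below $s$, forcing $x\leqslant s$). Thus $c\in S_s$, and the $n$-system property yields $y\in S_s$ with $y\leqslant c^2$. Applying the monotonicity condition to $c\leqslant x$ and $c\leqslant x$ gives $c^2\leqslant x^2\leqslant s$, whence $y\leqslant s$, contradicting $y\in S_s$. Therefore $s$ is semiprime.

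There is no genuinely hard step; the main obstacle is purely bookkeeping, namely keeping track of which hypotheses are needed where. Algebraicity is used in both directions to pass between an element and the set of compacts it dominates, while the monotonicity condition is used only in the converse direction to propagate a compact inequality $c\leqslant x$ through the multiplication to $c^2\leqslant x^2$. No further structural assumption (such as $m$-distributivity) is required.
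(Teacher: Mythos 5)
Your proof is correct and takes essentially the same route as the paper's: algebraicity is used in both directions to pass between an element and the compact elements below it, and monotonicity is used only in the converse to deduce $c^2\leqslant x^2$ from $c\leqslant x$. The only differences are cosmetic (you argue the converse by contradiction where the paper argues by contrapositive) together with your explicit treatment of the degenerate case $s=1$, which the paper leaves implicit.
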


\begin{proof} If $s$ is semiprime and $c\in S_s$, then $c\nleqslant s$, so that $c^2\nleqslant s$. Since $L$ is algebraic, there exists $y\in C(L)$ such that $y\leqslant c^2$ and $y\nleqslant s$. Thus $y\in S_s$ and $S_s$ is an $n$-system.

Conversely, if $s$ is not semiprime, there exists $t\in L$ such that $t^2\leqslant s$ but $t\nleqslant s$. Since $t$ is the join of the compact elements $c\leqslant t$, there exists a compact element $c\leqslant t$ with $c\not\leqslant s$. Then $c^2\leqslant t^2\leqslant s$, but $c\leqslant t$ with $c\not\leqslant s$. Hence $c\in S_s$, so that $S_s$ is not an $n$-system.\end{proof}

Notice that in \cite[Section 4]{FGT} there is an example of an $n$-system of a group that cannot be obtained as a
union of $m$-systems, a fact which is in contrast to the behaviour of $n$-systems in
rings.

\section{A case when open subsets of $\Spec(L)$ are spectra}
Let $L$ be a multiplicative lattice and let $x\in L$. The \emph{right annihilator} (resp. \emph{left annihilator}) of $x$ is the element $\rann_L(x):=(0:_rx)=\bigvee\{y\in L\mid xy=0 \}$ (resp., $\lann_L(x):=(0:_lx)=\bigvee\{y\in L\mid yx=0\}$). 

\begin{Rem}
We say that a multiplicative lattice $L$ has \emph{infinite $m$-distributivity} if $\left(\bigvee X\right)\left(\bigvee Y\right)=\bigvee_{(x,y)\in X\times Y}x y$ for every subset $X,Y$ of $L$. For such a multiplicative lattice $L$ one has $x\rann_L(x)=\lann_L(x)x=0$ for every element $x\in L$. 
\end{Rem}
 
 For instance, if $L:=\Cal N(G)$ is the multiplicative lattice of all normal subgroups of a group $G$, then, for every $N\in\Cal N(G)$, $\rann_L(N)=\lann_L(N)$ is the centralizer of $N$ in $L$. It is also possible to define the {\em right center} $rZ(x):=x\wedge \rann_L(x)$ and the
{\em left center} $lZ(x):=x\wedge \lann_L(x)$ of any element $x\in L$. %\textbf{Ci serve?}} Si', e' un concetto importantissimo, che noi abbiamo sviluppato poco, ma che e' fondamentale nei reticoli moltiplicativi e alle loro applicazioni alle strutture algebriche.

Let us recall the following result from \cite[Lemma~4.13]{F}:

\begin{Lemma}\label{4.13} Let $L$ be a multiplicative lattice with infinite $m$-distributivity, $h\in L$, and $n\in [0,h]_L$. If $0$ is a prime element of $[0,n]_L$, then $\lann_{[0,h]}(n)$ coincides with $\rann_{[0,h]}(n)$ and is the unique element $p\in\Spec([0,h]_L)$ such that $p\wedge n=0$.
\end{Lemma}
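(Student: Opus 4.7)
Write $p_L:=\lann_{[0,h]_L}(n)$ and $p_R:=\rann_{[0,h]_L}(n)$. Infinite $m$-distributivity, applied to the defining suprema, gives $p_L\cdot n=0$ and $n\cdot p_R=0$. The plan is to first prove $p_L=p_R$ by showing that, for every $x\in[0,h]_L$,
\[
xn=0\iff nx=0.
\]
If $xn=0$, then from $n\wedge x\leqslant x$ and $n\wedge x\leqslant n$ and the monotonicity condition (which follows from infinite $m$-distributivity via Lemma \ref{dist-implies-mono}) one gets $(n\wedge x)(n\wedge x)\leqslant xn=0$. Since $0$ is prime (hence semiprime) in $[0,n]_L$, and since the multiplication on $[0,n]_L$ coincides with the restriction of the multiplication of $L$ (Remark \ref{intervalli}), it follows that $n\wedge x=0$. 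Then $nx\leqslant n\wedge x=0$. The reverse implication is symmetric. Hence $\{x\in[0,h]_L\mid xn=0\}=\{x\in[0,h]_L\mid nx=0\}$, and therefore $p_L=p_R$; call this common value $p$.

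Next I would check that $p$ is prime in $[0,h]_L$. The crucial intermediate fact is that $p\wedge n=0$: indeed $(p\wedge n)^2\leqslant pn=0$, and again semiprimality of $0$ in $[0,n]_L$ forces $p\wedge n=0$. Now assume $x,y\in[0,h]_L$ satisfy $xy\leqslant p$. Set $x':=x\wedge n$ and $y':=y\wedge n$, both in $[0,n]_L$. Monotonicity yields $x'y'\leqslant xy\leqslant p$ and also $x'y'\leqslant n$, so $x'y'\leqslant p\wedge n=0$. Primality of $0$ in $[0,n]_L$ gives $x'=0$ or $y'=0$; say $x\wedge n=0$. Then $xn\leqslant x\wedge n=0$, so $x\leqslant\lann_{[0,h]_L}(n)=p$, as required.

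For uniqueness, suppose $q\in\Spec([0,h]_L)$ satisfies $q\wedge n=0$. From $qn\leqslant q\wedge n=0$ we get $q\leqslant p$. Conversely, $pn=0\leqslant q$ and the primality of $q$ in $[0,h]_L$ force either $p\leqslant q$ or $n\leqslant q$; in the latter case $n=n\wedge q=0$, but the hypothesis that $0\in\Spec([0,n]_L)$ requires $0\neq n$ (since prime elements are by definition distinct from the top element of the ambient lattice), a contradiction. Hence $p\leqslant q$, and so $p=q$.

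The only step requiring a bit of care is the passage from $xn=0$ to $nx=0$ (and the dual use of $p\wedge n=0$ in the primality argument): both rely on the ``squeeze'' $(n\wedge x)^2\leqslant xn$ and on the observation that primality of $0$ in $[0,n]_L$ provides semiprimality there, which is exactly the tool needed to kill $n\wedge x$. Once this is in place, the rest is bookkeeping with the two annihilators.
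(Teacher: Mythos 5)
Your argument is correct. Note that the paper itself does not prove Lemma~\ref{4.13}; it only quotes it from \cite[Lemma~4.13]{F}, so there is no in-paper proof to compare against, but your proof is self-contained and sound: the ``squeeze'' $(n\wedge x)^2\leqslant xn$ combined with semiprimality of $0$ in $[0,n]_L$ is exactly the right tool, infinite $m$-distributivity does yield $p_L\cdot n=n\cdot p_R=0$ and (via the finite case) the monotonicity condition, and the multiplications on $[0,h]_L$ and $[0,n]_L$ are indeed the restrictions of that of $L$ by Remark~\ref{intervalli}. The only point you leave implicit when asserting $p\in\Spec([0,h]_L)$ is that $p\neq h$; this follows at once from $p\wedge n=0$ together with $n\neq 0$ (the latter because $0$ is prime, hence not the top element, in $[0,n]_L$), an observation you do make later in the uniqueness step.
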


A consequence of the previous lemma is the following lying-over property for multiplicative lattices. 
\begin{prop}\label{lying-over}
Let $L$ be a multiplicative lattice with infinite $m$-distribu\-tiv\-ity and let $n\in L$. Then for every $q\in \Spec([0,n]_L)$ there exists a unique $p\in \Spec(L)$ such that $p\wedge n=q$. 
\end{prop}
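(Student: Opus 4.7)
The plan is to reduce the lifting problem to Lemma \ref{4.13} by passing to the quotient interval $M := [q,1]_L$ equipped with the multiplication $z \ast z' := (zz') \vee q$ from Remark \ref{intervalli}. Since $q \in \Spec([0,n]_L)$ we have $q \leqslant n$ and $q \neq n$, so $n$ genuinely lies in $M$; we will apply Lemma \ref{4.13} inside $M$, taking $h := 1_M = 1$ and applying the lemma to the element $n$.

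To make this go through, I first have to check that $M$ inherits infinite $m$-distributivity from $L$, which is a direct computation: for nonempty $X, Y \subseteq M$,
\[
\left(\bigvee X\right) \ast \left(\bigvee Y\right) = \left(\bigvee X\right)\left(\bigvee Y\right) \vee q = \bigvee_{(x,y)\in X\times Y} xy \,\vee\, q = \bigvee_{(x,y)\in X\times Y}(x \ast y),
\]
with the empty case being trivial. Then I have to verify that the bottom element $q$ of $M$ is prime in $[q,n]_M$ (whose induced multiplication is again $z \ast' z' = (zz')\vee q$). This reduces to showing that if $z,z' \in [q,n]$ and $zz' \leqslant q$, then $z = q$ or $z' = q$, which is immediate from the primality of $q$ in $[0,n]_L$ together with the constraint $z, z' \geqslant q$.

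Having verified the hypotheses, Lemma \ref{4.13} furnishes a unique $p \in \Spec(M)$ with $p \wedge_M n = q$; in fact $p = \lann_M(n) = \rann_M(n)$. By Lemma \ref{chiusi-sono-spettri} we have $\Spec(M) = \V_L(q) \subseteq \Spec(L)$, and meets in $M$ agree with those in $L$ on elements of $M$, so this $p$ satisfies $p \wedge n = q$ in $L$. For uniqueness inside $\Spec(L)$: any prime $p'$ of $L$ with $p' \wedge n = q$ automatically satisfies $p' \geqslant p' \wedge n = q$, hence $p' \in \V_L(q) = \Spec(M)$, and the uniqueness inside $M$ forces $p' = p$.

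Main obstacle: conceptually there is essentially none once the ambient lattice $M$ has been identified, since Lemma \ref{4.13} does all the real work. The only thing requiring care is tracking the double application of the interval construction, to confirm that the induced multiplication on $[q,n]_M$ is still of the form $(z,z') \mapsto zz' \vee q$, so that primality of $q$ as an element of $[0,n]_L$ in $L$'s own multiplication translates into primality of $q$ as the bottom element of $[q,n]_M$ in the twisted multiplication used by Lemma \ref{4.13}.
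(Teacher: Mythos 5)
Your proposal is correct and follows essentially the same route as the paper: pass to the interval $[q,1]_L$ with the twisted multiplication of Remark \ref{intervalli}, check that $q$ is prime as the zero element of $[q,n]$, invoke Lemma \ref{4.13}, and then use Lemma \ref{chiusi-sono-spettri} to identify $\Spec([q,1]_L)$ with $\V_L(q)$ for both existence and uniqueness. Your explicit verification that the interval inherits infinite $m$-distributivity is a hypothesis check the paper leaves implicit, but it does not change the argument.
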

\begin{proof}
Take a prime $q\in \Spec([0,n]_L)$ and consider the multiplicative lattices $M:=[q,n]_L\subseteq M':=[q,1]_L$, with respect to the multiplication $\ast$ defined in Remark \ref{intervalli}.  In view of the last statement of Lemma \ref{chiusi-sono-spettri}, $M$ is a prime multiplicative lattice and $q$ is its zero element. By Lemma \ref{4.13}, there exists a unique prime $p\in \Spec(M')$ such that $p\wedge n=q$. Again by Lemma \ref{chiusi-sono-spettri}, $\Spec(M')=\V_L(q)$ and thus $p$ is prime in $L$, in particular. Moreover, a prime $p'$ of $L$ such that $p'\wedge n=q$ satisfies $q\leqslant p'$ and thus $p'\in \Spec(M')$. The uniqueness of such a prime $p$ in $\Spec(L)$ is clear. 
\end{proof}

We can now state and prove the main result of this section. 

\begin{teor}\label{aperti-sono-spettri}
Let $L$ be a multiplicative lattice with infinite $m$-distributivity, let $n\in L$ and consider the open subspace $\D_L(n):=\Spec(L)\setminus \V_L(n)$ of $\Spec(L)$. Then the mapping $\varphi: \D_L(n)\to \Spec([0,n]_L)$, $p\mapsto p\wedge n$, is a homeomorphism. 
\end{teor}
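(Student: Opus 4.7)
The plan is to check in turn that $\varphi$ is well-defined, bijective, and bicontinuous. Well-definedness is a quick primality argument: for $p\in\Spec(L)$ with $n\not\leqslant p$, one has $p\wedge n\ne n$, and for $z,z'\in [0,n]_L$ the inequality $z\ast z'=zz'\leqslant p\wedge n\leqslant p$ together with the primality of $p$ and $z,z'\leqslant n$ gives $z\leqslant p\wedge n$ or $z'\leqslant p\wedge n$, so $\varphi(p)\in\Spec([0,n]_L)$. Bijectivity is exactly Proposition~\ref{lying-over}: the unique $p\in\Spec(L)$ with $p\wedge n=q$ automatically lies in $\D_L(n)$, since $q\ne n$ forces $n\not\leqslant p$.

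Continuity of $\varphi$ is the easy direction. For a basic closed set $\V_{[0,n]_L}(x)$ of $\Spec([0,n]_L)$ with $x\in[0,n]_L$, a direct computation yields
$$
\varphi^{-1}(\V_{[0,n]_L}(x))=\{p\in\D_L(n)\mid x\leqslant p\}=\V_L(x)\cap\D_L(n),
$$
which is closed in the subspace $\D_L(n)$.

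The main obstacle is showing that $\varphi$ is a closed map (equivalently, that $\varphi^{-1}$ is continuous). My plan is to establish the identity
$$
\varphi(\V_L(y)\cap \D_L(n))=\V_{[0,n]_L}(y\wedge n)\quad\text{for every }y\in L.
$$
The inclusion $\subseteq$ is immediate from $y\wedge n\leqslant y\leqslant p$ and $y\wedge n\leqslant n$. The delicate step is the reverse inclusion: given $q\in\V_{[0,n]_L}(y\wedge n)$, take the unique $p\in\Spec(L)$ provided by Proposition~\ref{lying-over} with $p\wedge n=q$, and then upgrade the evident $y\wedge n\leqslant p$ to $y\leqslant p$. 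The key observation is the multiplicative inequality $y\cdot n\leqslant y\wedge n\leqslant p$, which combined with the primality of $p$ and $n\not\leqslant p$ yields $y\leqslant p$. This gives $p\in\V_L(y)\cap\D_L(n)$ with $\varphi(p)=q$, completing the argument.
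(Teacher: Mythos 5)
Your proof is correct and follows essentially the same route as the paper: well-definedness by the primality of $p$, bijectivity via Proposition~\ref{lying-over}, and bicontinuity by explicit computation on basic sets. The only (cosmetic) difference is that you work with closed sets and show $\varphi$ is a closed map via $\varphi(\V_L(y)\cap \D_L(n))=\V_{[0,n]_L}(y\wedge n)$ --- using the inequality $y\cdot n\leqslant y\wedge n$ and primality to upgrade $y\wedge n\leqslant p$ to $y\leqslant p$ --- whereas the paper shows $\varphi$ is open via $\varphi(\D_L(ln))=\D_{[0,n]_L}(ln)$; both verifications rest on the same primality trick.
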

\begin{proof}
Let $p\in \D_L(n)$. In particular, $p\wedge n<n$. If $a,b\in M:=[0,n]_L$ satisfy $ab\leqslant p\wedge n$ (and thus $ab\leqslant p$), then either $a\leqslant p$ or $b\leqslant p$, because $p$ is prime in $L$. Then either $a\leqslant p\wedge n$ or $b\leqslant p\wedge n$, because $a,b\in M$. Therefore $\varphi$ is well defined. From Proposition \ref{lying-over} it immediately follows that $\varphi$ is bijective. By definition, an open subset of $\D_L(n)$ is of the type $\D_L(l n)$, where $l\in L$ is arbitrary. A straightforward verification shows that $$(\star)\qquad \varphi(\D_L(l n))=\D_M(l n),$$ and thus $\varphi$ is open. By definition, an open subset of $\Spec(M)$ is of the type $\D_M(m)$, for some $m \in M$. Since $m\leqslant n$ it follows $\D_M(m)=\D_M(m n)=\varphi(D_L(m n))$, and thus $(\star)$ and the fact that $\varphi$ is bijective imply  $$\varphi^{-1}(\D_M(m))=D_L(m n).$$ This proves that $\varphi$ is continuous. 
\end{proof}
Notice that the previous theorem is the lattice-theoretic counterpart of the following beautiful result by Acosta and Rubio (see \cite[Theorem 2]{AR}). For a ring $A$ let $\Spec(A)$ denote the set of prime ideals of $A$. 

\begin{cor}\label{acosta-rubio-general}
Let $S\subseteq R$ be commutative rings (not necessarily with identity), and assume that $S$ is an ideal of $R$. Then $\Spec(S)$ is homeomorphic to the open set $\Spec(R)\setminus \V(S)$ of $\Spec(R)$. More precisely, the mapping $\Spec(R)\setminus \V(S)\to \Spec(S)$, $\f p\mapsto \f p\cap S$, is a homeomorphism. 
\end{cor}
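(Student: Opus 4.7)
The plan is to apply Theorem~\ref{aperti-sono-spettri} to $L$, the multiplicative lattice of all ideals of $R$ (with the usual ideal product), choosing $n:=S$. Since ideal multiplication in a commutative ring distributes over arbitrary sums of ideals, $L$ has infinite $m$-distributivity, and by hypothesis $S \in L$. Theorem~\ref{aperti-sono-spettri} then yields a homeomorphism $\varphi \colon \D_L(S) \to \Spec([0, S]_L)$, $\f p \mapsto \f p \wedge S = \f p \cap S$. Since $\D_L(S) = \Spec(R) \setminus \V(S)$, what remains is to identify $\Spec([0, S]_L)$ with $\Spec(S)$, the set of prime ideals of the (possibly non-unital) ring $S$.

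The elements of $[0, S]_L$ are exactly the ideals of $R$ contained in $S$, and by Remark~\ref{intervalli} the multiplication on $[0, S]_L$ is the restriction of ideal multiplication in $R$. Every such $R$-ideal is \emph{a fortiori} an ideal of the ring $S$, so there is a natural inclusion from $[0, S]_L$ into the set of ideals of $S$, and I would show that it restricts to a bijection on prime elements. For the easy direction, if $\f q \in [0, S]_L$ is prime in the lattice and $a, b \in S$ satisfy $ab \in \f q$, then the principal $R$-ideals $(a)_R$ and $(b)_R$ are contained in $S$ (because $S$ is an ideal of $R$), so they lie in $[0, S]_L$; in the commutative case $(a)_R (b)_R = (ab)_R \subseteq \f q$, and hence $a \in \f q$ or $b \in \f q$ by primality in the lattice.

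The step I expect to be the main obstacle is the converse: showing that any prime ideal $\f q$ of the ring $S$ is automatically an $R$-ideal (and then prime in $[0,S]_L$). Given $r \in R$ and $q \in \f q$, one has $rq \in S$ because $S$ is an $R$-ideal, and for every $s \in S$ the element $s(rq) = (sr) q$ lies in $\f q$, because $sr \in S$ and $\f q$ is an ideal of $S$. Picking any $s_0 \in S \setminus \f q$ (which exists because $\f q \neq S$) and applying the element-wise primality of $\f q$ in $S$ to the relation $(rq)\,s_0 \in \f q$ forces $rq \in \f q$. Once $\f q$ is known to be an $R$-ideal, its primality in $[0, S]_L$ is immediate: for $R$-ideals $I, J \subseteq S$ with $IJ \subseteq \f q$, the same inclusion holds for the $S$-ideal product, and primality of $\f q$ in the ring $S$ yields $I \subseteq \f q$ or $J \subseteq \f q$. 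With these identifications, $\varphi$ becomes the map $\f p \mapsto \f p \cap S$ of the statement, completing the proof.
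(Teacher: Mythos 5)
Your proposal is correct and follows exactly the route the paper intends: the corollary is presented as an immediate consequence of Theorem~\ref{aperti-sono-spettri} applied to the ideal lattice $L$ of $R$ with $n=S$, and your verification that the prime elements of $[0,S]_L$ are precisely the prime ideals of the rng $S$ (every such prime being automatically an $R$-ideal) supplies exactly the details the paper leaves unstated. The only point worth making explicit is that the identification $\Spec([0,S]_L)=\Spec(S)$ is also topological, which holds because an ideal $I$ of $S$ and the $R$-ideal $I+RI\subseteq S$ it generates cut out the same closed set of primes.
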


\begin{Ex}
Let $V$ be a valuation domain whose maximal ideal $\f m$ is the union of the prime ideals of $V$ that are strictly contained in $\f m$ (such a ring exists: indeed, it is sufficient to take a valuation domain $V$ whose set of non-maximal prime ideals is order isomorphic to $(\mb N,\leqslant)$ \cite[Section 18, Exercise 9]{g}). The open subspace $X:=\Spec(V)\setminus\{\f m\}$ is not compact, because the collection of closed subsets $\{X\cap \V(xV)\mid x\in \f m \}$ of $X$ has the finite intersection property (since $\f m=\bigcup\{\f p\mid \f p\in X \}$) and empty intersection. Now let $L$ be the multiplicative lattice of all the ideals of $V$ and let $M:=\{\mbox{proper ideals of }V\}=[0,\f m]_L$. By  Theorem \ref{aperti-sono-spettri}, $\Spec(M)=X$ and it is not compact. Moreover, if $\f a$ is a compact element of $M$,  it is immediately seen that it is  a finitely generated ideal of $V$ and thus $\f a$ is principal, since $V$ is in particular a Bézout domain. Thus $\f a=xV$, for some $x\in \f a\subseteq \f m$. By construction, there exists a prime ideal $\f p\in X$ such that $x\in \f p$. It follows that $$S_X:=\{\f a\in C(M)\mid \f a\nsubseteq \f q, \mbox{ for every }\f q\in X\}=\emptyset$$  is not an m-system in $M$. 
\end{Ex}
\end{document}